\newtheorem{theorem}{Theorem}
\newtheorem{corollary}[theorem]{Corollary}
\newtheorem{lemma}[theorem]{Lemma}
\newtheorem{proposition}[theorem]{Proposition}
\newtheorem{remark}[theorem]{Remark}
\newcommand{\R}{\mathbb{R}}
\newcommand\norm[1]{\left\Vert#1\right\Vert}
\newcommand\dd{\mathrm{d}}
\def\<#1,#2>{\left<#1,#2\right>}
\let\bar\overline
\DeclareMathOperator{\diag}{diag}
\renewcommand\phi{\varphi}
\renewcommand\epsilon{\varepsilon}
\newcommand{\TabThree}[1]{ %
\begin{tabular}{@{}c@{\hspace{1mm}}c@{\hspace{1mm}}c@{}}
 #1
\end{tabular}
}
\title[An ODE characterisation of MMOT]{An ODE characterisation of multi-marginal optimal transport with pairwise cost functions}
\author[L. Nenna]{Luca Nenna}
\address{Universit\'e Paris-Saclay, CNRS, Laboratoire de math\'ematiques d'Orsay, 91405, Orsay, France. }
\email{luca.nenna@universite-paris-saclay.fr}
\author[B. Pass]{Brendan Pass}
\address{Department of Mathematical and Statistical Sciences, 632 CAB, University of Alberta, Edmonton, Alberta, Canada, T6G 2G1}
\email{pass@ualberta.ca}
\begin{document}

\maketitle
\begin{abstract}
    The purpose of this paper is to introduce a new numerical method to solve multi-marginal optimal transport problems with pairwise interaction costs.  The complexity of multi-marginal optimal transport generally scales exponentially in the number of marginals $m$.  We introduce a one parameter family of cost functions that interpolates between the original and a special cost function for which the problem's complexity scales linearly in $m$.  We then show that the solution to the original problem can be recovered by solving an ordinary differential equation in the parameter $\epsilon$, whose initial condition corresponds to the solution for the special cost function mentioned above; we then present some simulations, using both explicit Euler and explicit higher order Runge-Kutta schemes to compute solutions to the ODE, and, as a result, the multi-marginal optimal transport problem.
\end{abstract}
\section{Introduction}
The theory of optimal transport plays an important role in many applications (see \cite{Villani-OptimalTransport-09,Villani-TOT2003,santambook,peyre2017computational}). Its generalization to the multi-marginal case consists in minimizing the functional
\[\gamma\mapsto\int_{X^1 \times ...\times X^m}c(x^1,\cdots,x^m)\dd\gamma \]
among all probability measures $\gamma\in\mathcal P(X^1\times\cdots\times X^m)$ having fixed $\mu^i\in\mathcal P(X^i)$ with $i=1,\cdots,m$ as marginals, for a given cost function $c(x^1,....,x^m)$.
This problem has been at the center of growing interest in recent years since it arises naturally in many different areas of applications,  including Economics \cite{carlier2010matching}, Financial Mathematics \cite{beiglbock2013model,dolinsky2014martingale,dolinsky2014robust, Ennajietal22}, Statistics \cite{bigot2018characterization,carlier2016vector}, Image Processing \cite{rabin2011wasserstein}, Tomography \cite{abraham2017tomographic}, Machine Learning \cite{Hassleretal21, Trillosetal22}, Fluid Dynamics \cite{brenier1989least} and Quantum Physics and Chemistry, in the framework of Density Functional Theory \cite{buttazzo2012optimal,cotar2013density}.

The structure of solutions to the multi-marginal optimal transport problem is a notoriously delicate issue, and is still not well understood, despite substantial efforts on the part of many researchers \cite{gangbo1998optimal, Carlier03, CarlierNazaret08, Heinich05, Pass11,Pass12, KimPass14, KimPass15, ColomboDePascaleDiMarino15, ColomboStra16, PassVargas21, MoameniPass17, PassVargas2022, PassVargas21-2}; see also the surveys \cite{PassSurvey} and \cite{DiMarinoGerolinNenna15}. In many of the aforementioned applications, it is therefore pertinent to develop efficient numerical algorithms to compute solutions.  This, however, represents a significant challenge,  since the problem  amounts to a linear (or convex, in a popular regularized variant discussed below), 
yet high dimensional optimization problem: the complexity scales exponentially in the number $m$ of marginals. For instance a crude discretization of each of $5$ marginals (notice that in many applications the number of marginals could be dramatically large, e.g. in quantum mechanics where $m$ is the number of electrons in a molecule) using 100 Dirac masses would mean that the coupling $\gamma$ between the $5$ marginals is supported over $100^5 = 10^{10}$ Dirac masses, rendering  the problem practically intractable.
There have been recently some attempts to tackle this problem by using different approaches: entropic regularization \cite{benamou2017generalized,thesislulu,benamou2016numerical}, relaxation via moment constraints approximation \cite{alfonsi2021approximation,alfonsi2022constrained}, genetic column generation algorithm exploiting the existence of a sparse solution in the discrete case   \cite{friesecke2022genetic,friesecke2022gencol},  Wasserstein penalisation of the marginal constraints \cite{merigot2016minimal} and semidefinite relaxation \cite{khoo2019convex,khoo2020semidefinite}.\\ 

 In many cases of interest, the cost function $c(x^1,....x^m) =\sum_{i,j=1, i \neq j}^mw(x^i,x^j)$ is given by a sum of two marginal cost functions; when $w(x^i,x^j) =|x^i-x^j|^2$, for instance the multi-marginal problem is equivalent to the well known Wasserstein barycenter problem (see Proposition 4.2 in \cite{Carlier_wasserstein_barycenter}), while the Coulomb cost $w(x^i,x^j) =\frac{1}{|x^i-x^j|}$ plays a central role in the quantum chemistry applications pioneered in \cite{cotar2013density} and \cite{buttazzo2012optimal}.   Here, for such pairwise interaction costs, our aim is to develop a continuation method which, by introducing a suitable one parameter family of cost functions,  establishes a link between the original multi-marginal problem and a simpler one whose complexity scales linearly in the number of marginals. For discrete marginals, we show that, after the addition of an entropic regularization term, the solution of the original multi-marginal problem can be recovered by solving an ordinary differential equation (ODE) whose initial condition is the solution to the simpler problem. 
This method  is actually inspired by the one introduced in \cite{CarlierGalichonSantambrogio10}  to compute the Monge solution of the two marginal problem, starting from the Knothe-Rosenblatt rearrangement;  note, however, note that since we apply this strategy to a regularized problem, our resulting ODE enjoys better regularity than the one in \cite{CarlierGalichonSantambrogio10}, which, in turn, makes it amenable to higher order numerical schemes (see the description of numerical results in Section \ref{algorithm} below).  
The above mentioned  differential equation will be derived  by differentiating the optimality conditions of the dual problem; in particular by penalizing the constraints with the soft-max function we will obtain a well defined ODE for which existence and uniqueness of a solution can be established.   

When developing the ODE approach in Section \ref{sect: ode approach} below, we restrict our attention to the case when the marginals $\mu^i$ are all identical. This has the significant advantage of reducing the Kantorovich dual problem to a maximization over a single potential function,  while also capturing important applications arising in density functional theory.  Though we do not pursue this direction here, our approach, with minor modifications, will also work with distinct marginals.  In this case, if each measure is discretized using $N$ points, one would need to solve $(m-1)N$ coupled, real-valued ODEs (rather than the $N$ coupled ODEs dealt with here  in Section \ref{sect: ode approach}), reflecting the $m-1$ independent Kantorovich potentials needed to fully capture the solution.  In this setting, again with minor modifications, the pairwise interactions may in fact differ as well; that is, we may consider $c(x^1,....x^m) =\sum_{i,j=1, i \neq j}^mw_{ij}(x^i,x^j)$ where the $w_{ij}$ differ for different pairs $i,j$.   Although we do not present the detailed calculations for these non symmetric problems (since they are mathematically very close to the symmetric case, though more notationally cumbersome), we do present some numerical simulations for one such problem (relating to Brenier's relaxation of the variational formulation of the incompressible Euler equation) to illustrate how the numerics can be extended. In addition, a separate manuscript on a related but somewhat different approach, which deals with completely general (not necessarily pairwise) cost functions is currently under preparation.


The remainder of this manuscript is organized as follows.  In Section 2, we recall some basic facts about multi-marginal optimal transport, as well as its entropic regularization and the duals of both problems, and prove that for a  particular, simple cost function, the solutions to the regularized problem and its dual can be computing by solving $m-1$ two marginal problems.  This solution will serve as the initial condition for an ODE, which is introduced, and proven to be well-posed, in Section 3.  In Section 4, algorithms, based on this ODE, to compute the solution to the multi-marginal optimal transport problem are described and some resulting numerical simulations are presented.

\section{Multi-marginal optimal transport and entropic regularization}

Given $m$ probability measures $\mu^i$ on bounded domains $X^i \subseteq \mathbb{R}^n$ for $i=1,2...,m$ and a lower semi-continuous cost function $c: X^1 \times X^2 \times ...\times X^m \rightarrow \mathbb{R} \cup \{+\infty\}$, the multi-marginal optimal transport problem consists in solving the following optimization problem

\begin{equation}
\label{pb:mmot}
\inf_{\gamma\in\Pi(\mu^1,\cdots,\mu^m)}  \int_{X^1 \times X^2...\times X^m} c(x^1,...,x^m)\dd\gamma
\end{equation}
where $\Pi(\mu^1,\cdots,\mu^m)$ denotes the set of probability measures on $X^1 \times X^2 \times ...\times X^m$ whose marginals are the $\mu^i$. One can easily show by means of the direct method of calculus of variations that this problem admits at least a solution, which will be referred as the \textit{optimal transport plan}.
It is well known that under some mild assumptions the above problem is dual to the following 
\begin{equation}\label{eqn: multi-marginal dual}
    \sup\left\{  \sum_{i=1}^m\int_{X^i}\phi^i(x^i)\dd\mu^i \;|\;\phi^i \in L^1(\mu^i),\text{ }\sum_{i=1}^m\phi^i(x^i) \leq c(x^1,...,x^m)\right \}.
\end{equation}
We will also consider a common variant of \eqref{pb:mmot}, known as \textit{entropic optimal transport} which consists in adding an entropy regularization term.  For a parameter $\eta >0$, this is to minimize
\begin{equation}
\label{pb:mmot-reg}
\inf_{\gamma\in\Pi(\mu^1,\cdots,\mu^m)}\left\{  \int_{X^1 \times X^2...\times X^m} c(x^1,....,x^m)\dd\gamma + \eta H_{\otimes_{i=1}^m\mu^i}(\gamma)\right \}
\end{equation}
where the relative entropy $H_{\otimes_{i=1}^m\mu^i}(\gamma)$ with respect to product measure $\otimes_{i=1}^m\mu^i$ is defined by 
$$
H_{\otimes_{i=1}^m\mu^i}(\gamma) = \int_{X^1 \times...\times X^m} \frac{\dd\gamma}{\dd(\otimes_{i=1}^m\mu^i)}\log(\frac{\dd\gamma}{\dd(\otimes_{i=1}^m\mu^i)}) \dd(\otimes_{i=1}^m\mu^i),
$$ 
if $\gamma$ is absolutely continuous with respect to the product measure $\otimes_{i=1}^m\mu^i$ and $+\infty$ otherwise.  The regularized transport is, in turn, dual to the following unconstrained optimization problem 
\begin{equation}\label{eqn: multi-marginal reg dual}
\begin{split}
    \sup  &\sum_{i=1}^m\int_{X^i}\phi^i(x^i)\dd\mu^i\\
    &-\eta \int_{X^1 \times...\times X^m} e^{\frac{\sum_{i=1}^m\phi^i(x^i) -c(x^1,...,x^m)}{\eta} }\dd(\otimes_{i=1}^m\mu^i)(x^1,\cdots,x^m).
\end{split}
\end{equation}
The regularized problem \eqref{pb:mmot-reg} and its dual \eqref{eqn: multi-marginal reg dual} arise frequently in computational work.  We note in particular that \eqref{pb:mmot-reg} is the minimization of a strictly convex functional and therefore admits a unique solution.  It is well known that as $\eta \rightarrow 0$, solutions of \eqref{pb:mmot-reg} and \eqref{eqn: multi-marginal reg dual} converge to solutions of \eqref{pb:mmot} and \eqref{eqn: multi-marginal dual}, respectively.
When each $X^i$ is a finite set (a case of particular interest in this paper), we obtain discrete versions of \eqref{pb:mmot} and its dual, which amount to linear programs, taking the forms

\begin{equation}
    \label{eq:discretePrimal}
    \inf\left\{\sum_{\bar x\in \times_{i=1}^m X^{i}}c(\bar x)\gamma_{\bar x}\;|\;\gamma\in\Pi(\mu^1,\cdots,\mu^m)  \right\},
\end{equation}
where $ \bar x=(x^1,\cdots,x^m) \in X^1 \times ....\times X^m$ and 
\begin{equation}
\label{pb:discreteDual}
\sup\{ \sum_{i=1}^m\sum_{x\in X^i}\phi^{i}_x\mu^i_x\;|\;(\phi^1,\cdots,\phi^m)\in\mathcal T\}
\end{equation}
where, if we identify functions $\phi^i:X^i \rightarrow \mathbb{R}$ with points in $\mathbb{R}^{|X^i|}$,  \[\mathcal T:=\{ \phi^i\in\R^{|X^i|}\;\forall i=1,\cdots,m,\sum_{i=1}^m\phi^i_{x^i}\leq c(x^1,\cdots, x^m),\; \forall (x^1,\cdots, x^m)\in \times_{i=1}^mX^i \}.\]
Notice that, if each $|X^i|=N$, in the case of the primal problem we have to deal with $N^m$ unknowns and $mN$ constraints, whereas in the dual problem there are $mN$ unknowns and $N^m$ constraints. In both cases we have to deal with the so called ``curse of dimensionality,'' namely the complexity of the problem increases exponentially with the number of marginals.\\ 

In this discrete setting, the entropy regularized problem \eqref{pb:mmot-reg} and its dual \eqref{eqn: multi-marginal reg dual} then become finite dimensional convex optimization problems:
\begin{equation}
    \label{pb:primalEntropic}
    \inf\left\{\sum_{\bar x\in \times_{i=1}^m X^{i}}c(\bar x)\gamma_{\bar x}+\eta [H(\gamma)-H(\otimes^m\mu^i)]\;|\;\gamma\in\Pi(\mu^1,\cdots,\mu^m), \right\}
\end{equation}
where $\eta>0$ and $H$ is the entropy with respect to uniform measure on the finite set $X$ (we suppress the subscript on $H$ indicating the reference measure in the finite case, as we will only deal with entropy relative to the uniform measure), $H(\gamma)=\sum_{\bar x\in \times_{i=1}^m X^{i}}h(\gamma_{\bar x})$, with
\[
h(t)=\begin{cases}
&t(\log(t)-1),\;t>0\\
&0,\;t=0\\
&+\infty,\; t<0,
\end{cases}
\]
and
\begin{equation}
    \label{pb:dualEntropic}
    \sup\left \{ \sum_{i=1}^m\sum_{x\in X^i}\phi^{i}_x\mu^i_x-\eta\sum_{\bar x\in \times_{i=1}^m X^{i}}\exp\Bigg(\frac {\sum_i\phi^i_{x^i}-c(\bar x)}{\eta}\Bigg)(\otimes^m\mu^i)_{\bar x} \right\}.
\end{equation}
We note in particular that \eqref{pb:dualEntropic} is an \emph{unconstrained} finite dimensional concave maximization problem.  Solutions may be computed using a multi-marginal version of the Sinkhorn algorithm \cite{benamouetalentropic,CuturiSinkhorn,peyre2017computational,Galichon-Entropic}, and  one can then recover the optimal $\gamma$ in \eqref{pb:primalEntropic} from the solutions $\phi^1,...,\phi^m$ to \eqref{pb:dualEntropic} via the well known formula:
$$
\gamma_{\bar x}=\exp{\bigg(\frac{\sum_{i=1}^m\phi^i_{x^i}  -c(\bar x)}{\eta}\bigg)}\mu^1_{x^1}\mu^2_{x^2}...\mu^m_{x^m}
$$
where $\bar x=(x^1,....,x^m)$.

\subsection{Pairwise costs}\label{sect: pairwise cost}
We are especially interested in this paper in  cost functions $c(x^1,....,x^m)$ involving pair-wise interactions, that is

\[
c(x^1,....,x^m) =\sum_{i< j}^m w(x^i,x^j).
\]
Such costs are ubiquitous in applications: for example, for systems of interacting classical particles in \cite{cotar2013density, buttazzo2012optimal}, $c$ is a pair-wise cost, with  $w(x-y)=\dfrac{1}{|x-y|}$, known as the Coulomb cost. 
 The case where $w(x,y) =|x-y|^2$ is the quadratic distance is well known to be equivalent to the Wasserstein barycenter problem (see Proposition 4.2 in \cite{Carlier_wasserstein_barycenter}), which has a wide variety of applications in statistics, machine learning and image processing, among other areas.\footnote{Pairwise costs with pair-dependent interactions, that is, costs of the form $c(x^1,....,x^m) =\sum_{i< j}^m w_{ij}(x^i,x^j)$ where the $w_{ij}$ are not necessarily the same for each choice of $i$ and $j$, also arise in the time discretization of Arnold's variational formulation of incompressible Euler equation \cite{Arnold66} and Brenier's relaxation of it \cite{brenier1989least}, and in inference problems for probabilistic graphical models \cite{Hassleretal21}.  The results in this section adapt immediately to such costs.}
 
Let us now consider 
costs $c_\epsilon$ of the form 
\begin{equation}\label{eqn: epsilon cost}
 c_\epsilon(x^1,\cdots,x^m):=\epsilon\sum_{i=2}^m\sum_{j=i+1}^mw(x^i,x^j) +\sum_{i=2}^mw(x^1,x^i).
 \end{equation}
It is clear that when $\epsilon=1$ we retrieve a pair-wise cost as defined above whereas in the limit $\epsilon\to 0$ we obtain a cost involving only the interactions between $x^1$ and the other $x^i$ individually.  
Later on, we will develop an ordinary differential equation that governs the evolution with $\epsilon$ of the solutions to the regularized dual problem \eqref{pb:dualEntropic}; the  results below assert that the initial condition for that equation (that is, the solutions when $\epsilon =0$) can be recovered by solving each of the individual two marginal problems between $\mu^1$ and $\mu^i$. 

In what follows, we will assume that each marginal $\mu^i$ is absolutely continuous with respect to a fixed based measure $\nu^i$ with density given by $\frac{\dd\mu^i}{\dd\nu^i}$.
\begin{proposition}\label{prop: limiting entropy} Assume that each marginal $\mu^i$ is absolutely continuous with respect to a fixed based measure $\nu^i$ with density given by $\frac{\dd\mu^i}{\dd\nu^i}(x^i)$: $\dd\mu^i(x^i) = \frac{\dd\mu^i}{\dd\nu^i}(x^i)\dd\nu^i(x^i)$.  
    Consider the regularized  problem \eqref{pb:mmot-reg} with limiting pairwise cost; that is, set $\epsilon =0$ in \eqref{eqn: epsilon cost} to obtain:
 \begin{equation}\label{eqn: limiting regularized problem}
     \min_{\gamma \in \Pi(\mu^1,\mu^2,...,\mu^m)}\int \sum_{i=2}^mw(x^1,x^i)\dd\gamma +\eta H_{ \otimes_{i=1}^m \mu^i}(\gamma).
 \end{equation}

    Let $\frac{\dd\bar \pi^i}{\dd(\nu^1 \otimes \nu^i)}$ be the density with respect to product measure $\nu^1(x^1) \otimes \nu^i(x^i)$ of the minimizer $\bar \pi^i=\frac{\dd\bar \pi^i}{\dd(\nu^1 \otimes \nu^i)}\nu^1\otimes\nu^i$   in the  regularized two marginal problem:
    \begin{equation}\label{eqn: limiting pairwise probelms}
\min_{\pi^i \in \Pi(\mu^1,\mu^i) }  \int w(x^1,x^i)\dd\pi^i(x^1,x^i) +\eta H_{\mu^1 \otimes \mu^i}(\pi^i). 
    \end{equation}
    Then the density $\frac{\dd\bar\gamma}{\dd(\otimes_{i=1}^m\nu^i)}$ of the optimal $\bar\gamma =\frac{\dd\bar\gamma}{\dd(\otimes_{i=1}^m\nu^i)}(\otimes_{i=1}^m\nu^i)$  in \eqref{eqn: limiting regularized problem} is given by 
    $$
    \frac{\dd\bar\gamma}{\dd(\otimes_{i=1}^m\nu^i)}(x^1,...,x^m)=\frac{\frac{\dd\bar\pi^2}{\dd(\nu^1 \otimes \nu^2)}(x^1,x^2)}{\frac{\dd\mu^1}{\dd\nu^1}(x^1)}\frac{\frac{\dd\bar\pi^3}{\dd(\nu^1 \otimes \nu^3)}(x^1,x^3)}{\frac{\dd\mu^1}{\dd\nu^1}(x^1)}...\frac{\frac{\dd\bar\pi^m}{\dd(\nu^1 \otimes \nu^m)}(x^1,x^m)}{\frac{\dd\mu^1}{\dd\nu^1}(x^1)}\frac{\dd\mu^1}{\dd\nu^1}(x^1).
    $$
\end{proposition}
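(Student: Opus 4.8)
The plan is to write down the measure $\bar\gamma$ defined by the right-hand side of the claimed formula and verify directly that it is \emph{the} minimizer of \eqref{eqn: limiting regularized problem} --- recall the entropic functional is strictly convex, so the minimizer is unique. Throughout set $\rho^i := \frac{\dd\mu^i}{\dd\nu^i}$ and $c_0(x^1,\dots,x^m) := \sum_{i=2}^m w(x^1,x^i)$, and write $F(\gamma):=\int c_0\,\dd\gamma + \eta H_{\otimes_{i=1}^m\mu^i}(\gamma)$ for the objective in \eqref{eqn: limiting regularized problem}. The starting point is the Gibbs form of the solutions to the two-marginal problems \eqref{eqn: limiting pairwise probelms}: for each $i$ there are potentials $a^i\in L^1(\mu^1)$, $b^i\in L^1(\mu^i)$ (the maximizers in the two-marginal analogue of \eqref{eqn: multi-marginal reg dual}) with
\[
\frac{\dd\bar\pi^i}{\dd(\nu^1\otimes\nu^i)}(x^1,x^i)=\exp\!\Big(\tfrac{a^i(x^1)+b^i(x^i)-w(x^1,x^i)}{\eta}\Big)\,\rho^1(x^1)\,\rho^i(x^i).
\]

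First I would substitute this into the claimed expression for $\frac{\dd\bar\gamma}{\dd(\otimes_{i=1}^m\nu^i)}$: in the $i$-th factor the numerator $\frac{\dd\bar\pi^i}{\dd(\nu^1\otimes\nu^i)}$ contributes one power of $\rho^1(x^1)$, which cancels the $\rho^1(x^1)$ in the denominator, and the trailing lone factor $\rho^1(x^1)$ then restores a single power; the upshot is
\[
\frac{\dd\bar\gamma}{\dd(\otimes_{i=1}^m\nu^i)}(x^1,\dots,x^m)=\exp\!\Big(\tfrac{\Phi^1(x^1)+\sum_{i=2}^m b^i(x^i)-c_0(x^1,\dots,x^m)}{\eta}\Big)\prod_{i=1}^m\rho^i(x^i),\qquad \Phi^1:=\sum_{i=2}^m a^i,
\]
equivalently $\frac{\dd\bar\gamma}{\dd(\otimes_{i=1}^m\mu^i)}=\exp\big((\Phi^1+\sum_{i\ge2}b^i-c_0)/\eta\big)$. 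In other words $\bar\gamma$ already has the Gibbs shape of a minimizer of \eqref{eqn: limiting regularized problem} for the cost $c_0$, with integrable potentials; it remains only to check that it is feasible.

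To verify $\bar\gamma\in\Pi(\mu^1,\dots,\mu^m)$ I would write $\frac{\dd\bar\gamma}{\dd(\otimes_{i=1}^m\nu^i)}=\rho^1(x^1)\prod_{i=2}^m q^i(x^1,x^i)$ with $q^i(x^1,x^i):=\rho^1(x^1)^{-1}\frac{\dd\bar\pi^i}{\dd(\nu^1\otimes\nu^i)}(x^1,x^i)$. Since $\bar\pi^i$ has first marginal $\mu^1$, integrating its density against $\dd\nu^i$ gives $\rho^1(x^1)$, hence $\int q^i(x^1,x^i)\,\dd\nu^i(x^i)=1$ for $\nu^1$-a.e. $x^1$; integrating $\frac{\dd\bar\gamma}{\dd(\otimes\nu^i)}$ over $x^2,\dots,x^m$ therefore leaves $\rho^1(x^1)$, so the first marginal is $\mu^1$ (in particular $\bar\gamma$ is a probability measure). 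Likewise, integrating over $x^1$ and all $x^i$ with $i\ne j$ leaves $\int\rho^1(x^1)q^j(x^1,x^j)\,\dd\nu^1(x^1)=\int\frac{\dd\bar\pi^j}{\dd(\nu^1\otimes\nu^j)}(x^1,x^j)\,\dd\nu^1(x^1)=\rho^j(x^j)$, using that $\bar\pi^j$ has second marginal $\mu^j$; so the $x^j$-marginal is $\mu^j$. Optimality is then immediate from the Gibbs shape: for any competitor $\gamma\in\Pi(\mu^1,\dots,\mu^m)$ with $F(\gamma)<\infty$ (so $\gamma\ll\bar\gamma$), since $\gamma$ and $\bar\gamma$ share the marginals $\mu^i$,
\[
F(\gamma)=\int c_0\,\dd\gamma+\eta H_{\otimes_{i=1}^m\mu^i}(\gamma)=\eta H_{\bar\gamma}(\gamma)+\int\Phi^1\,\dd\mu^1+\sum_{i=2}^m\int b^i\,\dd\mu^i,
\]
and the last two terms are independent of $\gamma$; since $H_{\bar\gamma}(\gamma)\ge0$ with equality iff $\gamma=\bar\gamma$, we conclude $\bar\gamma$ is the unique minimizer.

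The only genuinely delicate point is the first one --- knowing that the two-marginal entropic problems \eqref{eqn: limiting pairwise probelms} admit maximizing potentials $a^i,b^i$, so that $\bar\pi^i$ has the Gibbs form above with $L^1$ potentials. In the finite setting of \eqref{pb:dualEntropic} emphasized in this paper this is immediate; in the general measure setting it is classical under the standing assumptions (bounded domains, $w$ lower semicontinuous and bounded below), going back to Csisz\'ar's theory of $I$-projections / the Schr\"odinger problem. Alternatively, one can sidestep the exponential form entirely: disintegrating $\gamma=\mu^1\otimes\gamma_{x^1}$ and using the chain rule for relative entropy together with the sub-additivity bound $H_{\otimes_{i\ge2}\mu^i}(\gamma_{x^1})\ge\sum_{i\ge2}H_{\mu^i}(\sigma^i_{x^1})$ (with equality exactly when $\gamma_{x^1}$ is the product of its marginals $\sigma^i_{x^1}$), one sees that an optimal $\gamma$ may be taken with product conditionals, which decouples \eqref{eqn: limiting regularized problem} into the independent two-marginal problems \eqref{eqn: limiting pairwise probelms} coupled only through their common first marginal $\mu^1$; gluing the optimal $\bar\pi^i$ along $\mu^1$ then produces $\bar\gamma$ with precisely the stated density.
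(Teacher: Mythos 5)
Your proposal is correct, and the primary argument you give is genuinely different from the paper's. The paper proceeds ``forwards'': it disintegrates an arbitrary feasible $\gamma$ as $\gamma_{x^1}\otimes\mu^1$, uses the chain rule for relative entropy together with the subadditivity bound $H_{\otimes_{i\ge2}\nu^i}(\gamma_{x^1})\ge\sum_{i\ge2}H_{\nu^i}(\pi^i_{x^1})$ (equality iff $\gamma_{x^1}$ is a product), and thereby lower-bounds $F(\gamma)$ by the decoupled sum of two-marginal objectives; optimality of the $\bar\pi^i$ and the product-structure equality condition then identify the minimizer. This is exactly your ``alternative'' sketch in the last paragraph. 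Your primary argument instead goes ``backwards'': write the candidate $\bar\gamma$ down via the Gibbs form of the two-marginal solutions, check the marginal constraints directly, and conclude by the standard rewriting $F(\gamma)=\eta H_{\bar\gamma}(\gamma)+\text{const}$, with $H_{\bar\gamma}(\gamma)\ge0$ and equality iff $\gamma=\bar\gamma$. Both routes are valid; the trade-off you correctly flag is that your verification needs the existence and $L^1$-integrability of the two-marginal potentials $a^i,b^i$ as an input (trivial in the finite case the paper uses in Sections~3--4, classical but nontrivial in the general measured setting), whereas the paper's subadditivity argument is more elementary and self-contained since it never invokes the dual potentials. One small point worth making explicit in your argument: the step $F(\gamma)<\infty\Rightarrow\gamma\ll\bar\gamma$ relies on the density $\dd\bar\gamma/\dd(\otimes\mu^i)=e^{(\Phi^1+\sum b^i-c_0)/\eta}$ vanishing only where $c_0=+\infty$, which in turn requires $\Phi^1,b^i$ finite a.e.; this is automatic under the paper's finite-cost assumption but deserves a sentence if one wants the result for unbounded $w$.
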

\begin{proof}
    Choose any $\gamma =\frac{\dd\gamma}{\dd(\otimes_{i=1}^m \nu^i)}(\otimes_{i=1}^m \nu^i)\in \Pi(\mu^1,\mu^2,...,\mu^m)$ which is absolutely continuous with respect to $\otimes_{i=1}^m \nu^i$ and let $\pi^i(x^1,x^i) =\Big((x^1,..,x^m)\mapsto (x^1,x^i)\Big)_\#\gamma \in \Pi(\mu^1,\mu^i)$ be its twofold marginals.  Then
    \begin{eqnarray}
    H_{ \otimes_{i=1}^m \mu^i}(\gamma)&=& \int_{X^1\times ...\times X^m} [\log(\frac{\dd\gamma}{\dd(\otimes_{i=1}^m\nu^i)}(x^1,...,x^m)) -\sum_{i=1}^m \log(\frac{\dd\mu^i}{\dd\nu^i}(x^i))]\dd\gamma(x^1,...,x^m)\nonumber\\
    &=&\int_{X^1\times ...\times X^m} [\log(\frac{\dd\gamma}{\dd(\otimes_{i=1}^m\nu^i)}(x^1,...,x^m))] \dd\gamma(x^1,....x_N) -\sum_{i=1}^m H_{\nu^i}(\mu^i),\label{eqn: relative entropy decomposition}
    \end{eqnarray}
    where each $H_{\nu^i}(\mu^i) =\int_{X^i}\log(\frac{\dd\mu^i}{\dd\nu^i}(x^i))\dd\mu^i(x^i)$ is constant throughout $\Pi(\mu^1,...,\mu^m)$.
    
    Now disintegrating $\gamma =\gamma_{x^1}(x^2,...,x^m)\otimes \mu^1(x^1)$ with respect to its first marginal $\mu^1$, we note that $\gamma_{x^1}(x^2,...,x^m) = \frac{\dd\gamma}{\dd(\otimes_{i=1}^m\nu^1)}(x^1,x^2,...,x^m)\frac{1}{\frac{\dd\mu^1}{\dd\nu^1}(x^1)}(\otimes_{i=2}^m\nu^i) $ and so
    \begin{equation}
     \label{eqn: conditional entropy}
    \begin{split}
    &\int_{X^1\times ...\times X^m} \log\bigg(\frac{\dd\gamma}{\dd(\otimes_{i=1}^m\nu^i)}\bigg) \dd\gamma \\ 
     =&\int_{X^1\times ...\times X^m} \log\bigg(\frac{\dd\gamma_{x^1}}{\dd(\otimes_{i=2}^m\nu^i)}\bigg) \dd\gamma +H_{\nu^1}(\mu^1)\\
    =&\int_{X^1}\int_{X^2\times ...\times X^m} \log\bigg(\frac{\dd\gamma_{x^1}}{\dd(\otimes_{i=2}^m\nu^i)}\bigg) \dd\gamma_{x^1}\dd\mu^1 +H_{\nu^1}(\mu^1)\\
    =&\int_{X^1}H_{\otimes_{i=2}^m \nu_{i}}(\gamma_{x^1})\dd\mu^1 +H_{\nu^1}(\mu^1)
    \end{split}
  \end{equation}
    where $H_{\otimes_{i=2}^m \nu_{i}}(\gamma_{x^1})$ is the entropy of $\gamma_{x^1}$ with respect to $\otimes_{i=2}^m \nu_{i}$ and $H_{\nu^1}(\mu^1)$ is the entropy of $\mu^1$ with respect to $\nu^1$.  Now note that if we disintegrate each $\pi^i =\pi^i_{x^1}(x^i)\otimes\mu^1(x^1)$ with respect to $\mu^1$, then for each fixed $x^1$, the conditional probability $\pi^i_{x^1}$ is the $i$th marginal of $\gamma_{x^1}$ and so
  
   \begin{eqnarray}
   \int_{X^1}H_{\otimes_{i=2}^m \nu_{i}}(\gamma_{x^1})\dd\mu^1(x^1) &\geq& \int_{X^1} \sum_{i=2}^m H_{\nu^i}(\pi^i_{x^1}) \dd\mu^1(x^1)\nonumber\\
   &=&\sum_{i=2}^m\int_{X^1}  H_{\nu^i}(\pi^i_{x^1}) \dd\mu^1(x^1)\nonumber\\
   &=&\sum_{i=2}^m [H_{\nu^1 \otimes \nu^i}(\pi^i) -H_{\nu^1}(\mu^1)]\nonumber\\
  &=&\sum_{i=2}^m [H_{\mu^1 \otimes \mu^i}(\pi^i) +H_{\nu^i}(\mu^i)], \label{eqn: conditional entorpy inequality}
   \end{eqnarray}
    where the equality $H_{\nu^1 \otimes \nu^i}(\pi^i)=\int_{X^1}  H_{\nu^i}(\pi^i_{x^1}) \dd\mu^1(x^1) +H_{\nu^1}(\mu^1)$ in the second to last line follows very similarly to the derivation of \eqref{eqn: conditional entropy} above and the equality  $H_{\nu^1 \otimes \nu^i}(\pi^i) -H_{\nu^1}(\mu^1)=H_{\mu^1 \otimes \mu^i}(\pi^i) +H_{\nu^i}(\mu^i)$ in the last line follows very similarly to 
the derivation of \eqref{eqn: relative entropy decomposition}.
    Therefore, combining \eqref{eqn: relative entropy decomposition}, \eqref{eqn: conditional entropy} and \eqref{eqn: conditional entorpy inequality}, we get
  \begin{eqnarray*}
     & \int_{X^1 \times ...X^m} \sum_{i=1}^mw(x^1,x^i)\dd\gamma +\eta H_{\otimes_{i=1}^m \mu_{i}}(\gamma)
     =  \int_{X^1 \times X^i} \sum_{i=1}^mw(x^1,x^i)\dd\pi^i +\eta H_{\otimes_{i=1}^m \mu_{i}}(\gamma) \\
      &\geq \int_{X^1 \times X^i} \sum_{i=2}^mw(x^1,x^i)\dd\pi^i+\sum_{i=2}^m [H_{\mu^1 \otimes \mu^i}(\pi^i) +H_{\nu^i}(\mu^i)]+H_{\nu^1}(\mu^1) -\sum_{i=1}^mH_{\nu^i}(\mu^i)\\
      &\geq  \int_{X^1 \times X^i} \sum_{i=2}^mw(x^1,x^i)\dd\bar \pi^i+\sum_{i=2}^m H_{\mu^1 \otimes \mu^i}(\bar \pi^i)\
    \end{eqnarray*}
    by optimality of $\bar \pi$.
    We have equality in the last line if and only if $\pi^i =\bar \pi^i$ for each $i$, and equality in the line above if and only if $\mu^1$ almost every $\gamma_{x^1}$ couples the $\pi^i_{x^i}$ independently; this yields the desired result.
\end{proof}
Note in particular that this result allows us to recover the  solution to problem \eqref{pb:primalEntropic} with cost \eqref{eqn: epsilon cost}, when $\epsilon =0$,   by solving $m-1$ individual regularized two marginal optimal transport problems. In the following section, we will develop a dynamical approach to solve the dual problem \eqref{pb:dualEntropic} to \eqref{pb:primalEntropic} for cost \eqref{eqn: epsilon cost} with $\epsilon >0$.  Our initial condition will be the dual potentials when $\epsilon =0$, which we can obtain from the corresponding two marginal dual potentials, as the following corollary confirms. 
\begin{corollary}\label{cor: epsilon =0 dual potentials}
    Assume each $\mu^i$ is absolutely continuous with respect to a given reference measure $\nu^i$.  For each $i=2,...m$, let $\psi^i(x^1), \phi^i(x^i),$ solve the regularized two marginal dual problem \eqref{pb:dualEntropic} between marginals $\mu^1$ and $\mu^i$ with cost function $w(x^1,x^i)$.  Then $\phi^1(x^1), \phi^2(x^2),...,\phi^m(x^m)$, with $\phi^1(x^1) =\sum_{i=2}^m\psi^i(x^1)$ solve the regularized dual \eqref{pb:dualEntropic} with marginals $\mu^1,\mu^2,....,\mu^m$ and cost $c(x^1,...,x^m)=\sum_{i=2}^mw(x^1,x^i)$.
\end{corollary}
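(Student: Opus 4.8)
The plan is to verify that the proposed potentials satisfy the first–order optimality conditions for the unconstrained, smooth, concave maximization \eqref{pb:dualEntropic} associated to the $m$–marginal cost $c(x^1,\dots,x^m)=\sum_{i=2}^m w(x^1,x^i)$. Since a concave $C^1$ function attains its maximum exactly at its stationary points, exhibiting a stationary point proves it is optimal; in particular we need not know a priori that the $m$–marginal dual is attained.

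First I would record the relevant stationarity conditions. Differentiating the objective of \eqref{pb:dualEntropic} in $\phi^k_{x^k}$ shows that $(\phi^1,\dots,\phi^m)$ is optimal if and only if, for each $k$ and each $x^k$ with $\mu^k_{x^k}>0$,
\[
\sum_{x^j\in X^j,\ j\neq k}\exp\!\Big(\tfrac{\sum_{j=1}^m\phi^j_{x^j}-c(\bar x)}{\eta}\Big)\prod_{j\neq k}\mu^j_{x^j}\;=\;1,
\]
i.e.\ the associated Gibbs plan $\gamma_{\bar x}=\exp(\cdots)\prod_j\mu^j_{x^j}$ has $k$–th marginal $\mu^k$ (the value of $\phi^k_{x^k}$ at an atom of zero mass is irrelevant). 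The identical computation for the two–marginal dual between $\mu^1$ and $\mu^i$ with cost $w$ says precisely that the given $(\psi^i,\phi^i)$ satisfy $\sum_{x^i}G^i_{x^1,x^i}\mu^i_{x^i}=1$ whenever $\mu^1_{x^1}>0$ and $\sum_{x^1}G^i_{x^1,x^i}\mu^1_{x^1}=1$ whenever $\mu^i_{x^i}>0$, where $G^i_{x^1,x^i}:=\exp\!\big(\tfrac{\psi^i_{x^1}+\phi^i_{x^i}-w(x^1,x^i)}{\eta}\big)$.

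The crux is an elementary factorisation: with $\phi^1=\sum_{i=2}^m\psi^i$ and $c=\sum_{i=2}^m w(x^1,x^i)$ the exponent separates,
\[
\sum_{j=1}^m\phi^j_{x^j}-c(\bar x)=\sum_{i=2}^m\big(\psi^i_{x^1}+\phi^i_{x^i}-w(x^1,x^i)\big),
\]
hence $\exp\!\big(\tfrac{\sum_j\phi^j_{x^j}-c(\bar x)}{\eta}\big)=\prod_{i=2}^m G^i_{x^1,x^i}$. Substituting this into the $m$–marginal condition and summing the variables one at a time (Fubini): for $k=1$ the sum factors over $i$ into $\prod_{i=2}^m\big(\sum_{x^i}G^i_{x^1,x^i}\mu^i_{x^i}\big)=1$; for $k\geq 2$ the sums over the $x^j$ with $j\notin\{1,k\}$ each contribute a factor $1$, leaving $\sum_{x^1}G^k_{x^1,x^k}\mu^1_{x^1}=1$. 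Either way the required value $1$ is obtained, so $(\phi^1,\dots,\phi^m)$ is a maximiser of \eqref{pb:dualEntropic}.

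I do not foresee a real obstacle; the only points needing a word of care are (i) the passage from stationarity to global optimality, which uses concavity of the objective of \eqref{pb:dualEntropic} (and its value is finite, since the same factorisation shows the exponential penalty equals exactly $\eta$), and (ii) the innocuous indeterminacy of $\phi^k$ on atoms of zero mass. I would also note in passing that the argument transfers verbatim, with integrals replacing sums and ``a.e.'' replacing ``at atoms of positive mass'', to the continuous dual \eqref{eqn: multi-marginal reg dual}, and that, alternatively, the corollary can be read off from Proposition \ref{prop: limiting entropy} by rewriting the product–form density of $\bar\gamma$ found there in the Gibbs form $\exp\!\big(\tfrac{\sum_j\phi^j-c}{\eta}\big)\bigotimes_j\mu^j$ and identifying the potentials.
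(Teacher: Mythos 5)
Your argument is correct, and it follows a route that is distinct from the paper's, so the comparison is worth recording. The paper proves the corollary by invoking Proposition~\ref{prop: limiting entropy}: it substitutes the Gibbs form of each two-marginal optimizer $\bar\pi^i$ into the product formula of that proposition, observes that the resulting density of $\bar\gamma$ is exactly $e^{(\sum_j\phi^j - c)/\eta}\prod_j\mu^j$ with $\phi^1=\sum_{i\geq 2}\psi^i$, and concludes because this is the Gibbs form characterizing the dual optimizer. Your proof instead verifies the first-order optimality conditions of the $m$-marginal dual \eqref{pb:dualEntropic} directly, with no appeal to the primal structure or to Proposition~\ref{prop: limiting entropy}: you isolate the factorization $e^{(\sum_j\phi^j-c)/\eta}=\prod_{i=2}^m G^i_{x^1,x^i}$ and then reduce the $m$-marginal stationarity conditions, by Fubini, to the two-marginal ones. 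The two proofs rest on the same algebraic identity, but yours is self-contained at the level of the dual and makes the logical dependency on the preceding proposition unnecessary, while the paper's is shorter given that Proposition~\ref{prop: limiting entropy} is already in hand. Your closing observations — that the argument transfers verbatim to the continuous dual, and that the corollary can alternatively be read off from Proposition~\ref{prop: limiting entropy} — are both accurate, the latter being precisely the paper's route. One cosmetic note: in the paper's displayed Gibbs formula for $\bar\pi^i$ there is a typographical slip ($\psi^i(x^i)$ for $\psi^i(x^1)$); your formulas have the argument right.
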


\begin{proof}
    We have that for each $i$, the optimizer $\bar \pi^i$ in the regularized two marginal primal problem satisfies
    $$
   \frac{\dd\bar\pi^i}{\dd(\nu^1 \otimes \nu^i)}(x^1,x^i) = e^{\frac{\phi^i(x^i) +\psi^i(x^i) -w(x^1,x^i)}{\eta}}\frac{\dd \mu^1}{\dd\nu^1}\frac{\dd \mu^i}{\dd\nu^i}.
    $$
    By the preceding proposition, the optimizer in the regularized multi-marginal problem \eqref{eqn: limiting regularized problem} satisfies
  \begin{eqnarray*}
   \frac{\dd\bar\gamma}{\dd(\otimes_{i=1}^m\nu^i)}(x^1,x^2,...,x^m)&=&\frac{\frac{\dd\bar\pi^2}{\dd(\nu^1 \otimes \nu^2)}(x^1,x^2)}{\frac{\dd\mu^1}{\dd\nu^1}(x^1)}\frac{\frac{\dd\bar\pi^3}{\dd(\nu^1 \otimes \nu^3)}(x^1,x^3)}{\frac{\dd\mu^1}{\dd\nu^1}(x^1)}...\frac{\frac{\dd\bar\pi^m}{\dd(\nu^1 \otimes \nu^m)}(x^1,x^m)}{\frac{\dd\mu^1}{\dd\nu^1}(x^1)}\frac{\dd\mu^1}{\dd\nu^1}(x^1)\\
    &=& e^{\frac{\sum_{i=2}^m[\phi^i(x^i) +\phi^i(x^1) -w(x^1,x^i)]}{\eta}}\frac{\dd \mu^2}{\dd\nu^2}\frac{\dd \mu_3}{\dd\nu^3}...\frac{\dd \mu^m}{\dd\nu^m}\frac{\dd \mu^1}{\dd\nu^1}\\
    &=&e^{\frac{\sum_{i=1}^m\phi^i(x^i) -\sum_{i=2}^mw(x^1,x^i)}{\eta}}\frac{\dd \mu^1}{\dd\nu^1}\frac{\dd \mu^2}{\dd\nu^2}\frac{\dd \mu_3}{\dd\nu^3}...\frac{\dd \mu^m}{\dd\nu^m}
  \end{eqnarray*}
  This is exactly the first order condition identifying the regularized potentials for the multi-marginal regularized problem with cost $\sum_{i=2}^mw(x^1,x^i)$.
\end{proof}
 \begin{remark}We note that the cost \eqref{eqn: epsilon cost} at $\epsilon =0$ falls under the class of tree-structured costs investigated in \cite{Hassleretal21}. These problems are known to correspond to probabilistic graphical models; in our setting, the corresponding graph is in fact a star graph centered at $x_1$.  From an algorithmic perspective, such costs are desirable since one can solve the multi-marginal problem by solving a two marginal problem for each edge of the graph (which is $m-1$ in our case); as was shown in \cite{Hassleretal21}, considering the regularized multi-marginal optimal transport problem in fact has some computational advantages over the corresponding series of pairwise regularization.  This suggests a connection between that line of research and Proposition \ref{prop: limiting entropy} and Corollary \ref{cor: epsilon =0 dual potentials}, since these results essentially assert an equivalence between the regularized multi-marginal problem \eqref{eqn: limiting regularized problem} and the $m$ pairwise problems \eqref{eqn: limiting pairwise probelms}; it is not clear to us whether the techniques in \cite{Hassleretal21} can be used to provide an alternative proof of these results.\end{remark}

\section{An ODE characterisation of discrete  multi-marginal optimal transport}\label{sect: ode approach}
We now turn our attention to developing an ODE for the Kantorovich potentials after discretizing the marginals.  Working with the regularized discrete problem \eqref{pb:primalEntropic} and its dual \eqref{pb:dualEntropic} with pairwise cost \eqref{eqn: epsilon cost}, we make the following, standing assumptions throughout this section:

\begin{enumerate}
\item (Equal marginals) All the marginals are equal $\mu^i=\rho =\sum_{x \in X}\rho_x\delta_x$, where $X$ is a finite subset of $\mathbb{R}^d$,
\item (Symmetric cost) The two body cost $w$ is symmetric $w(x,y) =w(x,y)$. 
    \item (Finite cost) The two body cost function $w:X \times X \rightarrow \mathbb{R}$ is everywhere real-valued.
\end{enumerate}
 A motivating example of a pairwise, symmetric two body cost arises in Density Functional Theory where the  cost is given by $w(x,y)=\dfrac{1}{|x-y|}$; in problems with this cost, the marginals are typically also identical.  
 The cost does not satisfy the finiteness hypothesis, but one can consider a truncation $w(x,y)=\min\bigg(\dfrac{1}{|x-y|},C\bigg)$ cost; it is known that the solution stays away from the diagonal, and for sufficiently large $C$, the solution with the truncated cost coincides with the solution for the original Coulomb cost (for instance see \cite{buttazzo2018continuity}).

\begin{remark}\label{rem: dropping symmetry}One could dispose of the equal marginal and symmetric cost assumptions.  Analogues of the results proved below would sill hold; one could characterize the solution to the regularized dual problem \eqref{pb:dualEntropic} by an ODE, and prove that this ODE is well-posed.   In this setting, one could also work with pair-dependent costs of the form $c(x^1,....,x^m) =\sum_{i< j}^m w_{ij}(x^i,x^j)$, as discussed briefly in Section \ref{sect: pairwise cost}.  As we will see below, however, solving the problem numerically becomes more feasible  under the hypotheses above, as the solution can be characterized by a single Kantorovich potential, and so the resulting ODE is an equation on $\mathbb{R}^N$, where $N$ is the number of points in the support of the marginal.   With unequal marginals and a non-symmetric cost, one would require $m-1$ independent Kantorovich potentials to fully characterize the solution; if each marginal is supported on $N$ points, this would lead to an $(m-1)N$ dimensional system of ODEs.  For the sake of simplicity we do not re-write the proof of the well-posedness since it works exactly as in the symmetric framework; however,  we refer the reader to section \ref{sec:euler} where we present numerical simulations for the (non-symmetric) case of the multi-marginal problem associated to the relaxed formulation of incompressible Euler equation introduced by Brenier. 
\end{remark}
 \subsection{Formulation of the ODE problem}
Notice now that although the cost \eqref{eqn: epsilon cost} at $\epsilon=1$ is symmetric in the variables $x^1,x^2,...,x^m$, the one at $\epsilon<1$ is not. It is, however, symmetric in the variables $x^2,...,x^m$; this means that the optimal $\phi^i$ in \eqref{pb:dualEntropic} satisfy $\phi^i=\phi^j=\phi$ for $i,j \geq 2$ and so, setting $\phi^1 =\psi,$ we can rewrite \eqref{pb:dualEntropic} as 
\begin{equation}
\label{pb:discreteDualRe}
\inf_{\phi,\psi: X \rightarrow \mathbb{R}}\left\{\Phi(\phi,\psi,\epsilon)\right\},
\end{equation}
where 
\[ \Phi(\phi,\psi,\epsilon):=-(m-1)\sum_{x\in X}\phi_x\rho_x-\sum_{x\in X}\psi_x\rho_x+\eta\sum_{\bar x\in  X^{m}}e^{\Bigg(\frac {\sum_{i=2}^m\phi_{x^i}+\psi_{x^1}-c_\epsilon(\bar x)}{\eta}\Bigg)}\otimes^m\rho.\]

\begin{remark}[Notation]
Recall that we use the notation $\bar x$ to represent a point in a product space, such as $\bar x =(x^1,\cdots,x^{m})\in X^m$, as above, or, as will often be the case in what follows, $\bar x =(x^1,\cdots,x^{m-1}) \in X^{m-1}$.  We introduce the following notation to represent corresponding products of the densities:
\[ \tilde \rho_{\bar x}=(\otimes^{m-1}\rho)_{\bar x}=\otimes_{i=1}^{m-1}\rho_{x^i} \]
\end{remark}

Since the functional $\Phi(\phi,\psi,\epsilon)$ is convex on the set $\{\phi,\psi: X \rightarrow \mathbb{R}\} \approx \mathbb{R}^{2|X|}$, as the sum of a linear and an exponential function,  optimal solutions $(\phi^*,\psi^*)$ can be characterized by the first order optimality conditions $\nabla_\phi\Phi=\nabla_\psi\Phi=0$, or (component-wise):
\[\phi^*_z=-\eta\log\Bigg(\sum_{\bar x\in X^{m-1}}\exp\Bigg(\frac{\sum_{i=2}^{m-1}\phi^*_{x^i}+\psi^*_{x^1}-c_\epsilon(\bar x,z)}{\eta}\Bigg)\tilde \rho_{\bar x}\Bigg)  \]
and
\begin{equation}\label{eqn: psi from phi}
\psi^*_z=-\eta\log\Bigg(\sum_{\bar x\in X^{m-1}}\exp\Bigg(\frac{\sum_{i=2}^{m}\phi^*_{x^i}-c_\epsilon(z,\bar x)}{\eta}\Bigg)\tilde \rho_{\bar x}\Bigg). 
\end{equation}
In particular, note that \eqref{eqn: psi from phi} allows us to express the optimal $\psi^*$ in \eqref{pb:discreteDualRe} in terms of the optimal $\phi^*$, after which \eqref{pb:discreteDualRe} reduces to the following optimization problem
\begin{equation}
\label{pb:discreteDualRered}
\inf_{\phi: X \rightarrow \mathbb{R}}\left\{\tilde\Phi(\phi,\epsilon)\right\},
\end{equation}
where
\[\tilde\Phi(\phi,\epsilon):=-(m-1)\sum_{x\in X}\phi_x\rho_x+\eta\sum_z\log\Bigg(\sum_{\bar x\in X^{m-1}}e^{\Bigg(\frac{\sum_{i=2}^{m}\phi_{x^i}-c_\epsilon(z,\bar x)}{\eta}\Bigg)}\tilde \rho_{\bar x}\Bigg)\rho_z . \]

\begin{remark}[LogSumExp and convexity]
The function \[\phi\mapsto\log\Bigg(\sum_{\bar x\in X^{m-1}}\exp\Bigg(\frac{\sum_{i=2}^{m}\phi_{x^i}-c_\epsilon(z,\bar x)}{\eta}\Bigg)\tilde \rho_{\bar x}\Bigg):=LSE_{c_\epsilon}(\phi)_z\]
is also known as Log-Sum-Exp function (LSE). By using the Hölder inequality one can easily show that the Log-Sum-Exp is convex.
\end{remark}
It is well known that the solution to \eqref{pb:dualEntropic} is unique up to the addition of constants $\phi^i \mapsto \phi^i+C^i$ adding to $0$, $\sum_{i=1}^mC^i=0$; thus, solutions to \eqref{pb:discreteDualRered} are unique up to the addition of a single constant, $\phi \mapsto \phi+C$.  
We therefore impose the normalization 
\begin{equation}\label{eqn: normalization}
    \phi_{x_0}=0 
\end{equation} 
for all $\epsilon\in[0,1]$ and a fixed $x_0\in X$. 

The  problem \eqref{pb:discreteDualRered}, restricted to $\phi$'s satisfying \eqref{eqn: normalization} then has a unique solution; the function $\tilde\Phi(\cdot,\epsilon)$ is strictly convex when restricted to this set, and the solution $\phi^*=\phi(\epsilon)$ 
 can be characterized by the optimality condition $\nabla_\phi\tilde\Phi(\phi^*,\epsilon)=0$, where each component of the gradient is given by
\begin{equation}
\label{eq:grad}
    \dfrac{\partial}{\partial\phi_z}\tilde\Phi=-(m-1)\rho_z+(m-1)e^{\phi_z/\eta}\rho_z\sum_y\sum_{\bar x\in X^{m-2}}e^{\Bigg(\frac{\sum_{i=3}^{m}\phi_{x^i}-c_\epsilon(y,z,\bar x)}{\eta}\Bigg)}(\otimes^{m-2}\rho_{\bar x})\bar \rho_y. 
\end{equation}

where
\[\bar \rho_y=\dfrac{\rho_y}{\sum_{\bar x\in X^{m-1}}\exp\Bigg(\frac{\sum_{i=2}^{m}\phi_{x^i}-c_\epsilon(y,\bar x)}{\eta}\Bigg)\tilde\rho_{\bar x}}.\]
Our numerical method consists then in solving an ODE for the evolution of $\phi(\epsilon)$ obtained by differentiating 
\begin{equation}
    \label{eqn: first order optimality}
\nabla_{\phi}\tilde\Phi(\phi(\epsilon),\epsilon)=0
\end{equation}
with respect to $\epsilon$:
\begin{equation}
\label{eq:ODE}
    \frac{\partial}{\partial \epsilon}\big[\nabla_{\phi}\tilde\Phi(\phi,\epsilon)\big]|_{\phi=\phi(\epsilon)}+D^2_{\phi,\phi}\tilde\Phi(\phi(\epsilon),\epsilon)\frac{\dd\phi}{\dd\epsilon}(\epsilon)=0.
\end{equation}
 If the  pure second derivatives  with respect to $\phi$ as well as the mixed second derivatives with respect to $\phi$ and $\epsilon$ exist and are Lipschitz, and  the Hessian with respect to $\phi$ is invertible, we will obtain a characterization of $\phi$ as the solution to the following well-posed Cauchy problem:
 \begin{equation}
     \label{eq:PdCprelim}
     \begin{cases}
      &\dfrac{\dd\phi}{\dd\epsilon}(\epsilon)=-[D^2_{\phi,\phi}\tilde\Phi(\phi(\epsilon),\epsilon)]^{-1}\dfrac{\partial}{\partial \epsilon}\nabla_{\phi}\tilde\Phi(\phi(\epsilon),\epsilon),\\
      &\phi(0)=\phi_w,
      \end{cases}
 \end{equation}
where, by Corollary \ref{cor: epsilon =0 dual potentials}, the initial value $\phi(0)$ of $\phi$ when $\epsilon =0$ coincides with  $\phi_w$, the optimal potential for the two marginal optimal transport problem with cost $w$.

The next section is devoted to proving these properties.

\subsection{Well posedness of the ODE}
We refer the reader to appendix \ref{app:seconderv} for the computation of the second pure and mixed derivatives with respect to $\phi$ and the second mixed derivative with respect to $\phi$ and  $\epsilon$.
In order to prove invertibility of $D^2_{\phi,\phi}\tilde \Phi$ and well posedness of the ODE we need some lemmas giving uniform bounds on the potential $\phi$ and the eigenvalues of $D^2_{\phi,\phi}\tilde \Phi$. We highlight that the following arguments are similar to (and largely inspired by) those in \cite{carlier2021linear} (the main differences lie in the fact that we  re-write the dual problem by using the Log-Sum-Exp function).
\begin{lemma}
\label{lemma:bounds}
    Let $c_\epsilon$ satisfy the boundedness assumption $\norm{c_\epsilon}_\infty\leq M$, $\forall \epsilon\in[0,1]$ \footnote{Note that the boundedness $\norm{c_\epsilon}_\infty\leq M$ for some $M>0$ follows immediately from our finite cost assumption on the finite set $X^m$. }.  Then the maximizer $\phi(\epsilon)$ of \eqref{pb:discreteDualRered} subject to the normalization constraint \eqref{eqn: normalization} satisfies
    \[ \norm{\phi(\epsilon)}_\infty\leq 4M. \]
\end{lemma}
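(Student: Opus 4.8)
The statement asserts a uniform (in $\epsilon$) $L^\infty$ bound on the normalized maximizer $\phi(\epsilon)$, so the plan is to exploit the first order optimality condition together with the normalization $\phi_{x_0}=0$. Recall from \eqref{eqn: psi from phi} that at the optimum we have $\psi^*_z = -\eta\log\big(\sum_{\bar x}\exp(\tfrac{\sum_{i=2}^m\phi^*_{x^i}-c_\epsilon(z,\bar x)}{\eta})\tilde\rho_{\bar x}\big)$, and from $\nabla_\phi\tilde\Phi=0$ (equivalently the first order condition for $\phi^*$ in \eqref{pb:discreteDualRe}) the analogous fixed-point identity $\phi^*_z = -\eta\log\big(\sum_{\bar x}\exp(\tfrac{\sum_{i=2}^{m-1}\phi^*_{x^i}+\psi^*_{x^1}-c_\epsilon(\bar x,z)}{\eta})\tilde\rho_{\bar x}\big)$. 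The first step is to turn each of these $\log$-of-sum expressions into two-sided bounds: since each sum is a convex combination (the $\tilde\rho_{\bar x}$ sum to $1$ because $\rho$ is a probability measure) of terms of the form $\exp(\tfrac{\text{(sum of potentials)}-c_\epsilon}{\eta})$, and $\|c_\epsilon\|_\infty\le M$, we get bounds of the shape
\[
\min_{\bar x}\Big(\text{sum of potentials}\Big) - M \;\le\; -\eta\log(\cdots) \;\le\; \max_{\bar x}\Big(\text{sum of potentials}\Big) + M,
\]
after absorbing signs correctly. So each component of $\phi^*$ is controlled by (max/min of the) other components of $\phi^*$ and $\psi^*$, up to an additive $M$.

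The second step is a standard oscillation-reduction argument: define $\mathrm{osc}(\phi) = \max_z\phi_z - \min_z\phi_z$ and similarly for $\psi$, and chase the inequalities from Step 1 to show that the oscillation of $\phi^*$ (and of $\psi^*$) is bounded by a constant times $M$, independently of $\epsilon$ and $\eta$. Concretely, writing the fixed-point identity at the index achieving $\max_z\phi^*_z$ and at the index achieving $\min_z\phi^*_z$ and subtracting, the contributions of the ``free'' potentials $\phi^*_{x^2},\dots,\phi^*_{x^{m-1}}$ and $\psi^*_{x^1}$ largely cancel in the ratio of the two sums, leaving $\mathrm{osc}(\phi^*)\le 2M$ — and symmetrically $\mathrm{osc}(\psi^*)\le 2M$ from \eqref{eqn: psi from phi}. (One has to be a little careful here: the two sums over $\bar x$ in the two evaluations differ only through the $c_\epsilon(\bar x, z)$ term, since the potential-dependent part of the summand is the same; bounding the ratio of the two sums by $e^{2M/\eta}$ gives exactly $\mathrm{osc}(\phi^*)\le 2M$.) This is the crux of the argument and the step I expect to require the most care in getting the cancellation and the direction of all the inequalities right.

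The third step converts the oscillation bound into the claimed sup bound using the normalization. Since $\phi^*_{x_0}=0$ and $\mathrm{osc}(\phi^*)\le 2M$, we immediately get $\|\phi^*\|_\infty\le 2M$; the factor $4M$ in the statement leaves room for the crude version of Step 1 in which one does not optimize the constants (e.g. bounding each of the two contributions — the oscillation of the other potential and the cost — separately, which can cost an extra additive $2M$). Alternatively, one can avoid any appeal to $\psi^*$ entirely and work directly with $\nabla_\phi\tilde\Phi(\phi^*,\epsilon)=0$ in the reduced problem \eqref{pb:discreteDualRered}, using the explicit formula \eqref{eq:grad}: setting $\partial_{\phi_z}\tilde\Phi=0$ gives
\[
e^{\phi^*_z/\eta} \;=\; \Big(\textstyle\sum_y\sum_{\bar x\in X^{m-2}}e^{(\sum_{i=3}^m\phi^*_{x^i}-c_\epsilon(y,z,\bar x))/\eta}(\otimes^{m-2}\rho)_{\bar x}\,\bar\rho_y\Big)^{-1},
\]
and the same ratio-of-sums estimate, now comparing the values at $\arg\max\phi^*$ and $\arg\max\phi^*_{x_0}=x_0$ and at $\arg\min$, yields $\mathrm{osc}(\phi^*)\le CM$ directly. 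Either route gives a bound of the form $C\cdot M$ with $C$ a small absolute constant ($\le 4$), completing the proof; I would present whichever of the two is cleaner once the cancellations are written out carefully, and note that the argument is exactly parallel to the a priori bounds in \cite{carlier2021linear}.
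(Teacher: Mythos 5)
Your proposal is correct and follows essentially the same route as the paper: the paper also starts from the first-order optimality identity for the reduced problem \eqref{pb:discreteDualRered}, writes $\phi_z=\phi_z-\phi_{x_0}$ using the normalization \eqref{eqn: normalization}, and bounds this difference by noting that after taking the ratio of the two $\log$-of-sum expressions the potential-dependent contributions cancel and only $c_\epsilon$ survives. Your guess about where the $4M$ comes from is exactly right: the paper bounds the normalizing weights $\bar\rho_y$ separately by $e^{\pm M/\eta}$ times a $c_\epsilon$-free quantity before forming the ratio, which costs an extra additive $2M$; since $\bar\rho_y$ does not depend on $z$ and hence appears identically in both $S_z$ and $S_{x_0}$, the termwise-ratio argument you sketch does give the sharper bound $\mathrm{osc}(\phi)\le 2M$, hence $\|\phi\|_\infty\le 2M$, which of course still implies the stated $4M$.
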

\begin{proof}
    By the first order optimality condition $\nabla_\phi\tilde\Phi =0$ for \eqref{pb:discreteDualRered} we deduce that each component of $\phi(\epsilon)$ is given by
    \[ \phi_z=-\eta\log\Bigg(\sum_y\sum_{\bar x\in X^{m-2}}\exp\Bigg(\frac{\sum_{i=3}^{m}\phi_{x^i}-c_\epsilon(y,z,\bar x)}{\eta}\Bigg)(\otimes^{m-2}\rho)_{\bar x}\bar \rho_y\Bigg) \]
    It is is easy to see that $\bar \rho_y$ can be bounded as follows
    \[ \dfrac{e^{-M/\eta}\rho_y}{\sum_{\bar x\in X^{m-1}}\exp\Bigg(\frac{\sum_{i=2}^{m}\phi_{x^i}}{\eta}\Bigg)\tilde\rho_{\bar x}}\leq \bar \rho_y\leq \dfrac{e^{M/\eta}\rho_y}{\sum_{\bar x\in X^{m-1}}\exp\Bigg(\frac{\sum_{i=2}^{m}\phi_{x^i}}{\eta}\Bigg)\tilde\rho_{\bar x}}. \]
Since we have imposed the normalization $\phi_{x_0}=0$ we get
    \[
    \begin{split}
    \phi_z=\phi_z-\phi_{x_0}&\leq-\eta\log\Bigg(e^{-2M/\eta}\frac{\sum_{\bar x\in X^{m-2}}\exp\Bigg(\frac{\sum_{i=3}^{m}\phi_{x^i}}{\eta}\Bigg)(\otimes^{m-2}\rho)_{\bar x}}{\sum_{\bar x\in X^{m-1}}\exp\Bigg(\frac{\sum_{i=2}^{m}\phi_{x^i}}{\eta}\Bigg)\tilde\rho_{\bar x}}  \Bigg)\\
    &+\eta\log\Bigg(e^{2M/\eta}\frac{\sum_{\bar x\in X^{m-2}}\exp\Bigg(\frac{\sum_{i=3}^{m}\phi_{x^i}}{\eta}\Bigg)(\otimes^{m-2}\rho)_{\bar x}}{\sum_{\bar x\in X^{m-1}}\exp\Bigg(\frac{\sum_{i=2}^{m}\phi_{x^i}}{\eta}\Bigg)\tilde\rho_{\bar x}}  \Bigg),\\
    &\leq 4M,
    \end{split}\]
    and the desired result immediately follows.
\end{proof}
Having established the above bounds, we aim to prove the well posedness of the Cauchy problem      \eqref{eq:PdCprelim} on the set 
\begin{equation}\label{eqn:def of U}
U:=\{\phi: X \rightarrow \mathbb{R}\;|\;\phi_{x_0}=0,\;   \norm{\phi}_\infty\leq 4M\}.
\end{equation}

\begin{lemma}\label{lem: lipschitz second derivatives}
$D^2_{\phi,\phi}\tilde\Phi(\phi,\epsilon)$ and $\frac{\partial}{\partial \epsilon}\nabla_{\phi}\tilde\Phi(\phi,\epsilon)$ are Lipschitz with respect to $\phi$ on $U$.
\end{lemma}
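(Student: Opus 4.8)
The plan is to verify Lipschitz continuity by examining the explicit formulas for the relevant second derivatives (deferred to the appendix) and showing that each building block is Lipschitz on the compact-like set $U$. First I would observe that $U$ is a bounded subset of $\mathbb{R}^N$ (with $N=|X|$), so by Lemma \ref{lemma:bounds} all the potentials we care about have $\norm{\phi}_\infty\le 4M$, and all exponents $\frac{\sum_{i}\phi_{x^i}-c_\epsilon(\bar x)}{\eta}$ that appear are uniformly bounded by a constant $K$ depending only on $m$, $M$, $\eta$ (using $\norm{c_\epsilon}_\infty\le M$ for all $\epsilon\in[0,1]$). Hence every exponential term $e^{(\cdots)/\eta}$ lies in a fixed compact interval $[e^{-K},e^{K}]\subset(0,\infty)$, bounded away from both $0$ and $\infty$.

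Next I would recall that the entries of $D^2_{\phi,\phi}\tilde\Phi$ and of $\frac{\partial}{\partial\epsilon}\nabla_\phi\tilde\Phi$ are, according to the computations in Appendix \ref{app:seconderv}, finite sums, products and quotients of: (i) the fixed weights $\rho_x$; (ii) the exponentials $e^{(\sum_i\phi_{x^i}-c_\epsilon(\bar x))/\eta}$; (iii) the normalizing denominators such as $\sum_{\bar x\in X^{m-1}}e^{(\sum_{i=2}^m\phi_{x^i}-c_\epsilon(y,\bar x))/\eta}\tilde\rho_{\bar x}$ appearing in $\bar\rho_y$; and, for the $\epsilon$-derivative, the cost increments $\sum_{i<j}w(x^i,x^j)$ (or $\partial_\epsilon c_\epsilon$) which are $\phi$-independent constants bounded by $mM$ on $X^m$. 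The map $\phi\mapsto e^{(\sum_i\phi_{x^i}-c_\epsilon(\bar x))/\eta}$ is $C^1$ with derivative bounded (in operator norm) by $\frac{m}{\eta}e^{K}$ on $U$, hence Lipschitz; each denominator is a finite positive sum of such terms, hence Lipschitz and bounded below by $\delta:=e^{-K}\min_y\rho_y\cdot(\min_x\rho_x)^{m-1}>0$; and a quotient $f/g$ of Lipschitz functions with $f$ bounded and $g$ bounded away from $0$ is again Lipschitz, as is any finite product of bounded Lipschitz functions. Assembling these closure properties gives the claim.

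The one point requiring a little care — and the main (minor) obstacle — is the algebra of the quotients: the denominators in $\bar\rho_y$ and in the analogous expressions from the appendix themselves depend on $\phi$, so one must track that they stay uniformly bounded below on all of $U$, not just at the optimizer. This is exactly what the uniform two-sided bounds in the proof of Lemma \ref{lemma:bounds} supply: for $\phi\in U$ every exponent is controlled, so every such denominator lies in $[\delta,\Delta]$ for fixed $0<\delta<\Delta<\infty$. Once this is in hand, the Lipschitz estimate follows from the elementary fact that if $f,g:U\to\mathbb{R}$ are $L_f$-, $L_g$-Lipschitz with $|f|\le A$ and $g\ge\delta>0$, then $f/g$ is Lipschitz with constant $\frac{L_f}{\delta}+\frac{A\,L_g}{\delta^2}$, applied finitely many times together with the product rule for Lipschitz functions. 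I would then simply remark that the required second derivatives are built from finitely many such operations starting from the affine maps $\phi\mapsto\sum_i\phi_{x^i}$ and the bounded constants $\rho_x$, $c_\epsilon(\bar x)$, $\partial_\epsilon c_\epsilon(\bar x)$, so the Lipschitz property propagates, completing the proof.
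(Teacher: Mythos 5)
Your argument is correct and is essentially the same as the paper's: both rely on the explicit formulas from Appendix \ref{app:seconderv} and the fact that on the bounded set $U$ every exponential, denominator, and cost term is uniformly controlled, so that smoothness on $U$ yields Lipschitz continuity. The paper compresses this to the one-line observation that the second derivatives are $C^1$ with bounded derivatives on $U$; you instead assemble the Lipschitz property from closure rules (products and quotients with denominators bounded below), which fills in the same content a bit more explicitly.
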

\begin{proof}
    This immediately follows from the fact that the the second pure and mixed derivatives computed in Appendix \ref{app:seconderv} are easily seen to be $C^1$, and their derivatives are all bounded on $U$.
\end{proof}
In order to prove the invertibility of $D^2_{\phi,\phi}\tilde\Phi$ we need the following lemma assuring the strong convexity of the Log-Sum-Exp function on the set $U$.
\begin{lemma}
\label{lem:strong_convexity}
Let $\Psi: \tilde U \rightarrow \mathbb{R}$ be defined on

$$
\tilde U_C=\{\theta:X^{m-1} \rightarrow \mathbb{R} \;|\; \theta_{\bar x_0} =0,\;\norm{\theta}_{\infty}<C\}.
$$
where $\bar x_0=(x_0,...,x_0) \in X^{m-1}$, by $\Psi(\theta)=\sum_{y\in X}\log\Big(\sum_{\bar x\in X^{m-1}}e^{\theta_{\bar x}-c_\epsilon(y,\bar x)}\tilde \rho_{\bar x} \Big)\rho_y$.

Then $\Psi$ is $\beta$-strongly convex for some $\beta >0$.
\end{lemma}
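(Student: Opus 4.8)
The plan is to establish $\beta$-strong convexity by bounding the Hessian $\nabla^2\Psi(\theta)$ below by $\beta I$ \emph{on the subspace tangent to $\tilde U_C$} — namely the subspace $V$ of those $v\colon X^{m-1}\to\R$ with $v_{\bar x_0}=0$ — uniformly for $\theta\in\tilde U_C$. Since $\Psi$ is smooth (a finite sum of logarithms of strictly positive, finite sums of exponentials) and $\tilde U_C$ is convex (the intersection of the hyperplane $\{\theta_{\bar x_0}=0\}$ with an $\ell^\infty$-ball), such a Hessian bound immediately gives the claim.

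First I would compute the Hessian. Fix $y\in X$, put $Z_y(\theta)=\sum_{\bar x\in X^{m-1}}e^{\theta_{\bar x}-c_\epsilon(y,\bar x)}\tilde\rho_{\bar x}$, and let $p_y(\theta)$ be the probability vector on $X^{m-1}$ with entries $p_y(\bar x)=e^{\theta_{\bar x}-c_\epsilon(y,\bar x)}\tilde\rho_{\bar x}/Z_y(\theta)$. Differentiating $\theta\mapsto\log Z_y(\theta)$ twice gives $\nabla_\theta\log Z_y=p_y$ and $\nabla^2_\theta\log Z_y=\diag(p_y)-p_yp_y^\top$, the covariance matrix of the Gibbs law $p_y$. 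Hence, for every $v=(v_{\bar x})$,
\[ v^\top\nabla^2\Psi(\theta)\,v=\sum_{y\in X}\rho_y\,\Var_{p_y}(v)=\tfrac12\sum_{y\in X}\rho_y\sum_{\bar x,\bar x'\in X^{m-1}}p_y(\bar x)\,p_y(\bar x')\,(v_{\bar x}-v_{\bar x'})^2\ \ge\ 0, \]
which is the quantitative form of the convexity of the Log-Sum-Exp already noted.

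The key step is to make this bound uniformly strictly positive on $V$. Because $\norm{\theta}_\infty<C$ on $\tilde U_C$, $\norm{c_\epsilon}_\infty\le M$, and every $\rho_x>0$ (so $X=\spt\rho$ and $\tilde\rho_{\bar x}\ge(\min_x\rho_x)^{m-1}>0$), a crude estimate of numerator and denominator gives $p_y(\bar x)\ge\delta$ for all $y,\bar x$ and all $\theta\in\tilde U_C$, with $\delta=\delta(C,M,\rho,m)=e^{-2(C+M)}(\min_x\rho_x)^{m-1}/\big(|X|^{m-1}(\max_x\rho_x)^{m-1}\big)>0$; note this is uniform in $\epsilon\in[0,1]$. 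Using $\sum_{\bar x,\bar x'}(v_{\bar x}-v_{\bar x'})^2=2\big(n\norm{v}_2^2-(\mathbf 1^\top v)^2\big)$ with $n=|X|^{m-1}$, I get $\Var_{p_y}(v)\ge\delta^2\big(n\norm{v}_2^2-(\mathbf 1^\top v)^2\big)$. For $v\in V$ the entry at $\bar x_0$ vanishes, so $v$ has at most $n-1$ nonzero coordinates and Cauchy--Schwarz yields $(\mathbf 1^\top v)^2\le(n-1)\norm{v}_2^2$; therefore $\Var_{p_y}(v)\ge\delta^2\norm{v}_2^2$ for $v\in V$. Summing over $y$ with $\sum_y\rho_y=1$ gives $v^\top\nabla^2\Psi(\theta)v\ge\delta^2\norm{v}_2^2$ on $V$, so the lemma holds with $\beta=\delta^2$.

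I expect the only real obstacle to be the uniformity in the third step: the lower bound must hold for all $\theta$ in the set (which is precisely why the lemma is stated on the bounded set $\tilde U_C$, and why the a priori estimate of Lemma~\ref{lemma:bounds} is proved first) and for all $\epsilon$ (which uses $\norm{c_\epsilon}_\infty\le M$ uniformly in $\epsilon$). Conceptually the one structural point is that the constraint $\theta_{\bar x_0}=0$ removes the common one-dimensional kernel $\Span(\mathbf 1)$ of the covariance matrices $\diag(p_y)-p_yp_y^\top$; without it $\Psi$ is merely convex and strong convexity is impossible.
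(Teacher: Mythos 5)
Your proof is correct, and it takes a genuinely different route from the paper's, even though both hinge on the same structural observation that fixing $\theta_{\bar x_0}=0$ removes the common kernel $\Span(\mathbf 1)$ of the per-$y$ Hessians. The paper eliminates the coordinate $\bar x_0$ outright and regards $f_y$ as a function of the remaining $K=|X|^{m-1}-1$ variables; after writing $z^j=e^{\theta_{\bar x_j}-c_\epsilon(y,\bar x_j)}\tilde\rho_{\bar x_j}$, its Hessian splits as a standard LSE part $-z\otimes z+\diag(z)\sum_j z^j$, which is only positive semi-definite, plus the extra term $\diag(z)\,e^{-c_\epsilon(y,\bar x_0)}\tilde\rho_{\bar x_0}$ generated by the frozen variable, which is positive definite and supplies an explicit eigenvalue lower bound $\beta$. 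You instead stay in the ambient space $\R^{X^{m-1}}$, identify $\nabla^2\log Z_y=\diag(p_y)-p_yp_y^\top$ as a Gibbs covariance, use boundedness of $\tilde U_C$ and of $c_\epsilon$ to get a uniform entrywise lower bound $p_y(\bar x)\ge\delta>0$, and then the discrete Poincar\'e-type inequality $\Var_{p_y}(v)\ge\delta^2\bigl(n\|v\|_2^2-(\mathbf 1^\top v)^2\bigr)$ combined with Cauchy--Schwarz on the $(n-1)$ nonzero entries of $v\in V$ yields $\Var_{p_y}(v)\ge\delta^2\|v\|_2^2$. Both arguments are sound and give explicit (different) constants; yours buys a cleaner probabilistic picture of why the constraint restores strong convexity, while the paper's gives the positive-definite piece of the Hessian in closed form. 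One minor point to make explicit, since you invoke it: the estimate $p_y(\bar x)\ge\delta$ uses $\rho_x>0$ for every $x\in X$, which is implicit throughout the paper but worth stating when you take $\min_x\rho_x$.
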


\begin{proof}

It is enough to show strong convexity on this set of the function \[f_y:\theta\in \tilde U_C\mapsto\log\Big(\sum_{\bar x\in X^{m-1}}e^{\theta_{\bar x}-c_\epsilon(y,\bar x)}\tilde\rho_{\bar x} \Big) =\log\Big(e^{-c_\epsilon(y, \bar x_0)}\tilde \rho_{\bar x_0}+\sum_{\bar x\in X^{m-1}\setminus \{\bar x_0\}}e^{\theta_{\bar x}-c_\epsilon(y,\bar x)}\tilde\rho_{\bar x} \Big)\] for a fixed $y$.

Enumerating the set $ X^{m-1}\setminus \{\bar x_0\}$ of independent variables as $\bar x_j$ for $j \in (1,...,K)$ with $K= |X|^{m-1}-1$, and denoting $z^j =e^{\theta_{\bar x}-c_\epsilon(y,\bar x)}\tilde\rho_{\bar x}$ the Hessian of $f_y$ is
$$
\frac{1}{\Big(e^{-c_\epsilon(y, \bar x_0)}\tilde \rho_{\bar x_0}+\sum_{j}z^j\Big)^2}\Big(-z\otimes z +\diag(z)(\sum_{j}z^j +e^{-c_\epsilon(y, \bar x_0)}\tilde \rho_{\bar x_0})\Big)
$$
The first two terms together constitute a positive semi-definite matrix (see \cite{BoydVandenberghe04}, p.74), while the third is positive definite, with lower bound 

\[\beta =\frac{e^{-C-2M}\tilde \rho_{\bar x_0}\min_{\bar x} \tilde \rho_{\bar x}}{(e^{C+M}\tilde \rho_{\bar x_0} +\sum_{\bar x\in X^{m-1}\setminus \{\bar x_0\}}e^{C+M}\tilde\rho_{\bar x})^2}=e^{-4M-3C}\tilde \rho_{\bar x_0}\min_{\bar x} \tilde \rho_{\bar x}.\]

It follows that $f_y$, and therefore $\Psi$, is $\beta$-convex on $\tilde U$.

\end{proof}

 \begin{lemma}\label{lem: strong convexity}

Let $\Lambda: U \rightarrow \tilde U_{4(m-1)M}$ be the linear mapping defined by $\Lambda(\phi)_{\bar x}=\phi_{x^1}+\cdots+\phi_{x^{m-1}}$ $\forall \bar x\in X^{m-1}$. Then $\tilde\Psi(\phi):=\Psi(\Lambda(\phi))$ is $\alpha-$strongly convex on $U$.
\end{lemma}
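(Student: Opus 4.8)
The plan is to push the strong convexity of $\Psi$ from Lemma \ref{lem:strong_convexity} through the linear map $\Lambda$. Since $\Lambda$ is linear, the chain rule yields, for every $\phi\in U$ and every tangent direction $v:X\to\R$ of $U$ (i.e.\ $v_{x_0}=0$),
\[
\bigl\langle D^2_{\phi,\phi}\tilde\Psi(\phi)\,v,v\bigr\rangle=\bigl\langle D^2\Psi(\Lambda\phi)\,\Lambda v,\Lambda v\bigr\rangle .
\]
So the lemma will follow once I verify (i) that $\Lambda(U)\subseteq\tilde U_{4(m-1)M}$ and that $\Lambda v$ lies in the slice $\{\theta_{\bar x_0}=0\}$ on which the Hessian lower bound $\beta$ of Lemma \ref{lem:strong_convexity} holds — which turns the right-hand side into $\ge\beta\|\Lambda v\|^2$ — and (ii) a uniform coercivity estimate $\|\Lambda v\|^2\ge\kappa\|v\|^2$ with $\kappa>0$, which then gives $\alpha=\beta\kappa$.

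Claim (i) is just bookkeeping with the normalisations: $\|\phi\|_\infty\le 4M$ forces $|\Lambda(\phi)_{\bar x}|=\bigl|\sum_{k=1}^{m-1}\phi_{x^k}\bigr|\le 4(m-1)M$ and $\Lambda(\phi)_{\bar x_0}=(m-1)\phi_{x_0}=0$, so $\Lambda(\phi)\in\tilde U_{4(m-1)M}$ (the borderline case $\|\Lambda\phi\|_\infty=4(m-1)M$ is harmless, since the explicit value of $\beta$ in Lemma \ref{lem:strong_convexity} stays valid on the closure); likewise $v_{x_0}=0$ gives $\Lambda(v)_{\bar x_0}=0$. For claim (ii) I would expand the square coordinatewise: with $N=|X|$,
\[
\|\Lambda v\|^2=\sum_{\bar x\in X^{m-1}}\Bigl(\sum_{k=1}^{m-1}v_{x^k}\Bigr)^2=(m-1)N^{m-2}\|v\|^2+(m-1)(m-2)N^{m-3}\Bigl(\sum_{x\in X}v_x\Bigr)^2\ge (m-1)N^{m-2}\|v\|^2,
\]
discarding the nonnegative cross term. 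Conceptually this is nothing but injectivity of $\Lambda$ on the finite-dimensional space $\R^{|X|}$ — so $\Lambda^{\top}\Lambda$ is positive definite — and the computation merely pins down the eigenvalue bound $\kappa=(m-1)N^{m-2}$.

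Putting (i) and (ii) together gives $\langle D^2_{\phi,\phi}\tilde\Psi(\phi)v,v\rangle\ge \beta(m-1)N^{m-2}\|v\|^2$ uniformly over $\phi\in U$ and over tangent directions $v$, i.e.\ $\tilde\Psi$ is $\alpha$-strongly convex on $U$ with $\alpha=\beta(m-1)N^{m-2}>0$. I do not foresee a serious obstacle: the lemma is essentially a corollary of Lemma \ref{lem:strong_convexity}. The step to watch is the coercivity bound (ii), since it is what prevents $\Lambda$ from collapsing the strong convexity, together with the minor bookkeeping in (i) lining up the normalisation $\phi_{x_0}=0$ of $U$ with the normalisation $\theta_{\bar x_0}=0$ of $\tilde U_C$ and checking that the constant $\beta$ of Lemma \ref{lem:strong_convexity} — depending only on $M$, $\tilde\rho_{\bar x_0}$ and $\min_{\bar x}\tilde\rho_{\bar x}$, not on $\phi$ — is genuinely uniform over $U$.
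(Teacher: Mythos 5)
Your proof is correct and takes the same route as the paper: push the $\beta$-strong convexity of $\Psi$ through $\Lambda$ via the chain rule, then use a lower bound $\|\Lambda v\|^2\ge\kappa\|v\|^2$ to get $\alpha=\beta\kappa$. The one substantive difference is in how $\kappa$ is obtained. The paper simply notes $\|\Lambda v\|^2\ge\sum_{x\in X}((m-1)v_x)^2$, i.e.\ it keeps only the diagonal entries $\bar x=(x,\dots,x)$ of the sum, giving $\kappa=(m-1)^2$, and then remarks that this is not optimal and that the sharp value is the least eigenvalue of $\Lambda^*\Lambda$. You actually carry out that computation: expanding the square gives $\Lambda^*\Lambda=(m-1)N^{m-2}I+(m-1)(m-2)N^{m-3}\mathbf 1\mathbf 1^{\top}$, hence $\kappa=(m-1)N^{m-2}$, which dominates the paper's constant whenever $N^{m-2}\ge m-1$ and is the exact smallest eigenvalue. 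You also flag and close a small bookkeeping gap the paper glosses over: $\tilde U_C$ is defined with $\|\theta\|_\infty<C$ while $\|\Lambda\phi\|_\infty$ can equal $4(m-1)M$ on the boundary of $U$, so one needs to observe that the explicit $\beta$ of Lemma \ref{lem:strong_convexity} persists on the closure, and that $\Lambda$ respects the normalisation $\theta_{\bar x_0}=0$. Both proofs are valid; yours yields a cleaner, quantitatively sharper constant.
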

 \begin{proof}
By the linearity of $\Lambda$, one gets that, for  $\phi\in U$, $D^2\tilde\Psi(\phi)(v,v)=D^2\Psi(\Lambda(\phi))(\Lambda(v),\Lambda(v))$ for all $v\in U$. Thus,
\[ D^2\tilde\Psi(\phi)(v,v)=D^2\Psi(\Lambda(\phi))(\Lambda(v),\Lambda(v))\geq  \beta \norm{\Lambda(v)}^2. \]
Since $\norm{\Lambda(v)}^2\geq \sum_{x\in X}\norm{(m-1)v_x}^2$ we finally get
\[ D^2\tilde\Psi(\phi)(v,v)\geq \alpha \norm{v}^2,\]
with $\alpha=\beta (m-1)^2>0$, proving the $\alpha-$strong convexity of $\tilde\Psi$.
\end{proof}
\begin{remark}
The $\alpha$ obtained in the Lemma above is not optimal; indeed we would have obtained a better lower bound on the eigenvalues of $D^2_{\phi,\phi}\tilde\Psi$ by computing the smallest eigenvalue of $\Lambda^*\Lambda$. Moreover, in the previous lemma we take, for simplicity, $\eta=1$, otherwise the parameter $\alpha$ would have taken the form $\alpha=e^{(-4C-3M)/\eta}(m-1)^2$. Notice now that $\alpha$ approaches to 0 as $\eta\to 0$, meaning the the condition number of the Hessian of $\tilde\Phi$  explodes. Namely, this will produce numerical instabilities. 
\end{remark}
It easily follows from the previous lemma that $D^2_{\phi,\phi}\tilde\Phi =D^2_{\phi,\phi}\tilde\Psi$ is invertible on the set $U$; we can then state the following result on the well posedness of \eqref{eq:PdCprelim}.
\begin{theorem}
Let $\phi(\epsilon)$ be the solution to \eqref{pb:discreteDualRered} for all $\epsilon\in[0,1]$. Then $\epsilon\mapsto\phi(\epsilon)$ is $\mathcal C^1$ and is the unique solution to the Cauchy problem
 \begin{equation}
     \label{eq:PdC}
     \begin{cases}
      &\dfrac{\dd\phi}{\dd\epsilon}(\epsilon)=-[D^2_{\phi,\phi}\tilde\Phi(\phi(\epsilon),\epsilon)]^{-1}\dfrac{\partial}{\partial \epsilon}\nabla_{\phi}\tilde\Phi(\phi(\epsilon),\epsilon),\\
      &\phi(0)=\phi_w,
      \end{cases}
 \end{equation}
 where $\phi_w$ is the optimal solution to \eqref{pb:dualEntropic} with cost $w$ and two marginals equal to $\rho$.
\end{theorem}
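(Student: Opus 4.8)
The plan is to recover $\phi(\epsilon)$ from the first order optimality relation $\nabla_\phi\tilde\Phi(\phi,\epsilon)=0$ by the implicit function theorem, and then to identify the resulting curve with the unique solution of the Cauchy problem \eqref{eq:PdC} via the Cauchy--Lipschitz (Picard--Lindel\"of) theorem. All of the analytic input is already in place: the uniform bound $\norm{\phi(\epsilon)}_\infty\le 4M$ of Lemma \ref{lemma:bounds} (so that $\phi(\epsilon)\in U$ for every $\epsilon\in[0,1]$), the Lipschitz continuity of the second derivatives of Lemma \ref{lem: lipschitz second derivatives}, and the uniform strong convexity and consequent invertibility of $D^2_{\phi,\phi}\tilde\Phi$ on $U$ furnished by Lemma \ref{lem: strong convexity}.

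First I would record the regularity of $F(\phi,\epsilon):=\nabla_\phi\tilde\Phi(\phi,\epsilon)$, viewed on the hyperplane $H:=\{\phi:X\to\R\;|\;\phi_{x_0}=0\}$. Since $X$ is finite and $c_\epsilon$ is real-valued, every argument of the logarithms in $\tilde\Phi$ is bounded below by a positive constant, so $\tilde\Phi(\cdot,\epsilon)$ is smooth in $\phi$; its partial derivatives are the $D^2_{\phi,\phi}\tilde\Phi$ and $\tfrac{\partial}{\partial\epsilon}\nabla_\phi\tilde\Phi$ computed in Appendix \ref{app:seconderv}, which are continuous (indeed locally Lipschitz in $\phi$ by Lemma \ref{lem: lipschitz second derivatives}) jointly in $(\phi,\epsilon)$. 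Hence $F$ is $C^1$ on $H\times\R$. Next, by Lemma \ref{lem: strong convexity}, $D^2_{\phi,\phi}\tilde\Phi(\phi,\epsilon)$ is invertible for every $\phi\in U$ and every $\epsilon\in[0,1]$, with eigenvalues bounded below by a constant $\alpha>0$ independent of $\phi$ and $\epsilon$, so that $\norm{[D^2_{\phi,\phi}\tilde\Phi(\phi,\epsilon)]^{-1}}\le 1/\alpha$ there. Since invertibility is an open condition and the Hessian depends continuously on $(\phi,\epsilon)$, there is an open neighbourhood $\mathcal V\supseteq U\times[0,1]$ on which
\[
G(\phi,\epsilon):=-\,[D^2_{\phi,\phi}\tilde\Phi(\phi,\epsilon)]^{-1}\,\tfrac{\partial}{\partial\epsilon}\nabla_\phi\tilde\Phi(\phi,\epsilon)
\]
is well defined and, matrix inversion being smooth on invertible matrices and the Hessian entries being Lipschitz by Lemma \ref{lem: lipschitz second derivatives}, locally Lipschitz in $\phi$, uniformly for $\epsilon\in[0,1]$.

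Then, for each fixed $\epsilon_*\in[0,1]$, I would apply the implicit function theorem at $(\phi(\epsilon_*),\epsilon_*)$: since $F(\phi(\epsilon_*),\epsilon_*)=0$ by \eqref{eqn: first order optimality} and $D_\phi F(\phi(\epsilon_*),\epsilon_*)=D^2_{\phi,\phi}\tilde\Phi(\phi(\epsilon_*),\epsilon_*)$ is invertible, there is a $C^1$ curve $\epsilon\mapsto\tilde\phi(\epsilon)$ defined near $\epsilon_*$ with $\tilde\phi(\epsilon_*)=\phi(\epsilon_*)$ and $F(\tilde\phi(\epsilon),\epsilon)=0$. Being a critical point of the convex function $\tilde\Phi(\cdot,\epsilon)$ on $H$, $\tilde\phi(\epsilon)$ is its unique minimizer over $H$, hence $\tilde\phi(\epsilon)=\phi(\epsilon)$; thus $\phi$ agrees near every point of $[0,1]$ with a $C^1$ function and is therefore $C^1$ on all of $[0,1]$. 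Differentiating the identity $F(\phi(\epsilon),\epsilon)\equiv 0$ in $\epsilon$ (which is precisely \eqref{eq:ODE}) and inverting the Hessian yields $\tfrac{\dd\phi}{\dd\epsilon}(\epsilon)=G(\phi(\epsilon),\epsilon)$, while $\phi(0)=\phi_w$ is exactly the content of Corollary \ref{cor: epsilon =0 dual potentials}; so $\phi$ solves \eqref{eq:PdC}. For uniqueness, since $G$ is locally Lipschitz in $\phi$ on the open set $\mathcal V\supseteq U\times[0,1]$ and the solution just constructed takes values in $U$, the Cauchy--Lipschitz theorem applies: any solution $\psi$ of \eqref{eq:PdC} on $[0,1]$ has $\psi(0)=\phi_w=\phi(0)$, and a standard open--closed continuation argument shows $\{\epsilon\in[0,1]:\psi(\epsilon)=\phi(\epsilon)\}$ is nonempty, open and closed in $[0,1]$, hence all of $[0,1]$.

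The proof has no single deep step: everything substantive — the $L^\infty$ bound on $\phi(\epsilon)$, the Lipschitz bounds on the second derivatives, and above all the uniform lower bound $\alpha>0$ on the spectrum of $D^2_{\phi,\phi}\tilde\Phi$ on $U$ — has already been established in the preceding lemmas. What needs care is the bookkeeping: (i) patching the local implicit-function branches into a single globally defined $C^1$ curve, for which the uniqueness of the convex minimizer at each $\epsilon$ is essential; and (ii) checking that the vector field $G$ is genuinely locally Lipschitz on an open set containing the whole graph $\{(\phi(\epsilon),\epsilon):\epsilon\in[0,1]\}$, so that Cauchy--Lipschitz can be invoked — this is exactly where the uniform bound $\norm{[D^2_{\phi,\phi}\tilde\Phi]^{-1}}\le 1/\alpha$ coming from Lemma \ref{lem: strong convexity} is combined with the Lipschitz continuity of the Hessian from Lemma \ref{lem: lipschitz second derivatives}.
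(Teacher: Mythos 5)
Your proposal is correct and follows essentially the same route as the paper's own proof: apply the implicit function theorem to the optimality relation $\nabla_\phi\tilde\Phi(\phi(\epsilon),\epsilon)=0$ using the invertibility of $D^2_{\phi,\phi}\tilde\Phi$ from Lemma~\ref{lem: strong convexity}, identify $\phi(0)=\phi_w$ via Corollary~\ref{cor: epsilon =0 dual potentials}, and conclude uniqueness by Cauchy--Lipschitz using the Lipschitz bounds of Lemma~\ref{lem: lipschitz second derivatives} together with the a priori confinement $\phi(\epsilon)\in U$ from Lemma~\ref{lemma:bounds}. Your version simply spells out two points the paper leaves implicit — the patching of local implicit-function branches into a single global $C^1$ curve (using uniqueness of the strictly convex minimizer at each $\epsilon$) and the extension of the vector field to an open neighbourhood of $U\times[0,1]$ before invoking Cauchy--Lipschitz — but there is no difference in substance.
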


\begin{proof}
As $\phi(\epsilon)$ minimizes $\tilde \Phi(\cdot,\epsilon )$ for each fixed $\epsilon$, we clearly have \eqref{eqn: first order optimality}.  Since $\tilde \Phi$ is clearly twice differentiable with respect to $\phi$ and $\epsilon$ and $D^2_{\phi\phi}\tilde \Phi$ is invertible by Lemma \ref{lem: strong convexity}, the Implicit Function Theorem then implies that $\epsilon \mapsto \phi(\epsilon)$ is $C^1$ and satisfies \eqref{eq:ODE}, or equivalently, \eqref{eq:PdC}.  

Since $D^2_{\phi,\phi}\tilde \Phi$ and $\frac{\partial }{\partial \epsilon}\nabla _{\phi}\tilde \Phi$ are Lipschitz continuous with respect to $\phi$ on $U$ by Lemma \ref{lem: lipschitz second derivatives} and clearly continuous with respect to $\epsilon$, and since $D^2_{\phi,\phi}\tilde \Phi$ is uniformly positive definite by Lemma \ref{lem: strong convexity}, we have that 
$$
(\phi, \epsilon) \mapsto -[D^2_{\phi,\phi}\tilde\Phi(\phi(\epsilon),\epsilon)]^{-1}\dfrac{\partial}{\partial \epsilon}\nabla_{\phi}\tilde\Phi(\phi(\epsilon),\epsilon)
$$
is Lipschitz with respect to $\phi$ and continuous with respect to $\epsilon$ on $U$. Since by Lemma \ref{lemma:bounds} $\phi(\epsilon) \in U$ for all $\epsilon,$ the Cauchy-Lipschitz Theorem then implies uniqueness of the solution to \eqref{eq:PdC} on $U \times [0,1]$, as desired.

\end{proof}

\section{Algorithm and simulation}\label{algorithm}
In this subsection we present some 
numerical simulations\footnote{The simulations have been performed in Python on 2 GHz Quad-Core Intel Core i5 MacBook Pro.} obtained by 
discretizing the above ODE.

 The algorithm consists in discretizing \eqref{eq:PdC} by an explicit Euler scheme (notice that one could also use some high order method for the ODEs). Let $h$ be the step size and set $\phi(0)=\phi_w$ the solution of a 2 marginal problem with cost $w$, then the $\phi$ can be defined inductively as detailed in \ref{algo}.
\begin{algorithm}
    \caption{Algorithm to compute the $\phi$ via explicit Euler method}\label{algo}
    \begin{algorithmic}[1]
     \Require $\phi(0)=\phi_w$
     \While{$||\phi^{(k+1)}-\phi^{(k)}||<$tol}
     \State $D^{(k)}:=D^{2}_{\phi,\phi}\tilde\Phi(\phi^{(k)},kh)$
     \State $b^{(k)}:=-\dfrac{\partial}{\partial \epsilon}\nabla_{\phi}\tilde\Phi(\phi^{(k)},kh)$
     \State Solve $D^{(k)}z=b^{(k)}$
     \State $\phi^{(k+1)} = \phi^{(k)}+hz$
     \EndWhile
    \end{algorithmic}\label{alg: pseudo-code}
\end{algorithm}
 


Notice that by the regularity we have proved above, we can conclude that the Euler scheme converges linearly. Moreover, the uniform error between the discretized solution obtained via the scheme and the solution to the ODE is $O(h)$.

\begin{remark}[Complexity in the marginals]
We highlight that this ODE approach  does not overpass the  NP hardness (see \cite{altschuler2021hardness}) of some multi-marginal problems, as the one with the Coulomb cost,  and it still suffers the exponential complexity in the number of  marginals. However,  since the ODE is smooth, one can try apply an high order method  (we will do a careful analysis of it later in the section) which can converge quickly in the number of iterations and have a computational time competitive with respect to Sinkhorn.  

We also note that, if the symmetry assumptions are dropped, as discussed in Remark \ref{rem: dropping symmetry}, one needs to solve a system of $(N-1)m$ ODEs.  The matrix $D^{(k)}$ in Algorithm \ref{alg: pseudo-code} is then be $(N-1)m \times (N-1)m$ instead of $m \times m$, increasing the leading term in the complexity by the relatively manageable multiplicative factor $(N-1)^2$.  Numerical results for one such case are presented in Section \ref{sec:euler} below.
\end{remark}

 In Figure  \ref{fig1a} we plot the convergence order for the Euler scheme described above. The error is computed with respect to the solution to \eqref{pb:discreteDualRered} computed via a gradient descent method with backtracking. Notice that the regularity of the objective function and the boundedness of the Hessian guarantee the convergence of the method. For these simulations we have taken $m=3$, the uniform measure on $[0,1]$ uniformily discretized with $100$ gridpoints and the pairwise interaction $w(x,y)=-\log(0.1+|x-y|)$. Moreover, since the RHS in \eqref{eq:PdC} is regular one can try to apply an high order scheme to solve the Cauchy problem. In Figure  \ref{fig1b} we compare the convergence of the Euler method and a Runge-Kutta of order 3; notice that with  $100$ time steps the RK method converges to a solution with an error of order $10^{-5}$ and by an estimation of the slope of the two lines we obtain 3 and 1.16, respectively for RK and Euler (as expected).
\begin{figure}[h!]
     \centering
     \begin{subfigure}{0.5\textwidth}
         \centering
         \includegraphics[width=1\linewidth]{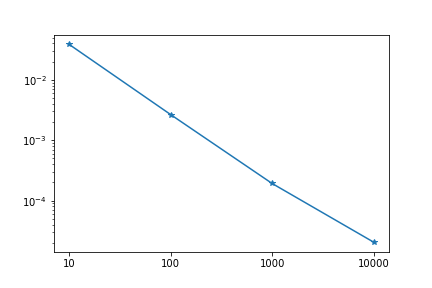}
         \caption{Linear convergence for an explicit Euler scheme.}
         \label{fig1a}
     \end{subfigure}
     \begin{subfigure}{0.5\textwidth}
         \centering
         \includegraphics[width=1\linewidth]{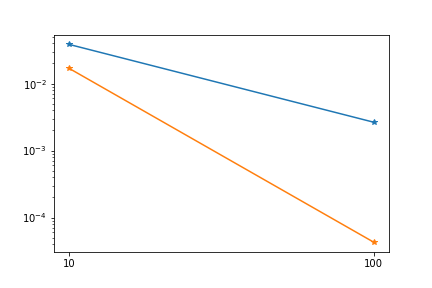}
         \caption{Comparison between explicit Euler (blue line) and an explicit Runge-Kutta (red line) of order 3}
         \label{fig1b}
     \end{subfigure}
     \caption{Order of convergence.}
     \label{fig:convergence}
\end{figure}

In Figure \ref{fig:fig1} we show the numerical result obtained with $\eta=0.006$, $h=1/1000$, $m=3$, the uniform measure on $[0,1]$ uniformily discretized with $100$ gridpoints and the pairwise interaction $w(x,y)=-\log(0.1+|x-y|)$. Notice that since we have developed our continuation method by  the entropic regularization of optimal transport, we can easily reconstruct the optimal (regularized) plan at each time $k$ by using the potential $\phi^{(k)}$.
 Moreover, it is interesting to notice that the optimal plan at each time step of the ODE stays deterministic (taking into account the blurring effect of the entropic regularization); that is, it is concentrated on a low dimensional structure.
\begin{remark}[Intermediate step of the ODE approach] It is interesting to highlight that each $ith$-step of the ODE approach actually returns the optimal solution to the multi-marginal problem with the cost $c_{kh}$. This, in particular, implies that one can also use this approach in order to retrieve the solutions of many multi-marginal problems with different costs, by choosing a suitable $c_\varepsilon$, instead of solving these problems individually using Sinkhorn.   
    
\end{remark}

 We are now interested in comparing the ODE approach and the Sinkhorn algorithm in terms of performance. In order to do this we consider the optimal solution of the regularized dual problem obtained with a gradient descent with backtracking and take it as the reference solution  to compute the relative error $\frac{\|\phi-\phi_{ref}\|_{\infty}}{\|\phi_{ref}\|_{\infty}}$.
Concerning the ODE, since we have already remarked above that it is smooth, we consider different high order methods such as 3rd, 5th and 8th order Runge-Kutta methods. By looking at Table \ref{tab1}, it is clear that all the  methods achieve almost the same relative error, but the number of iterations to reach it as well as the CPU time in seconds slightly differs. In particular we remark that a 3rd order RK is faster than a Sinkhorn in terms of computational time and iterations but less precise. The other RK methods achieve comparable precision to Sinkhorn with less iterations but the computational cost at the step of the ODE becomes now quite onerous  implying a  significant increase (especially 8th RK) in terms of CPU time.

\begin{figure}[h!]
\TabThree{
\includegraphics[width=.35\linewidth]{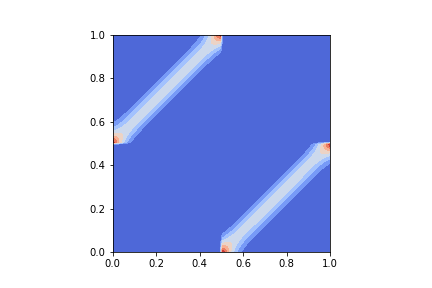}&\includegraphics[width=.35\linewidth]{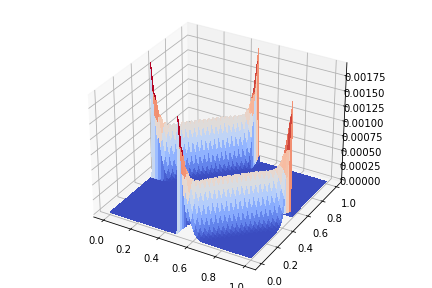} &\includegraphics[width=.35\linewidth]{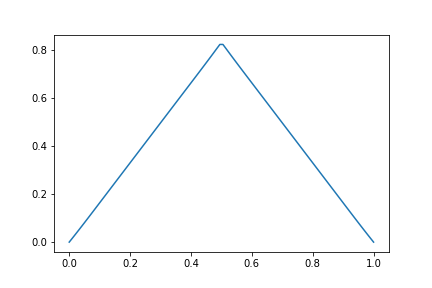}\\
\includegraphics[width=.35\linewidth]{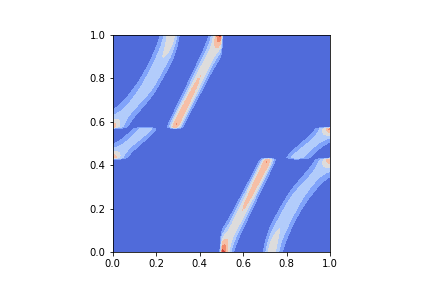}&\includegraphics[width=.35\linewidth]{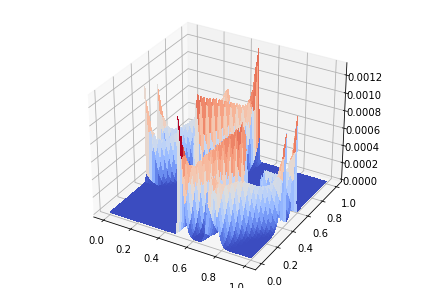} &\includegraphics[width=.35\linewidth]{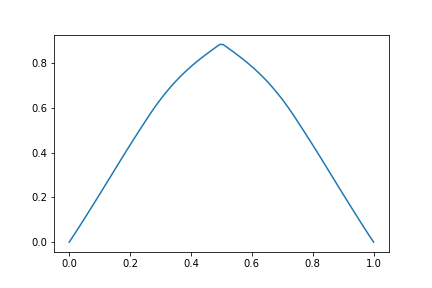}\\
\includegraphics[width=.35\linewidth]{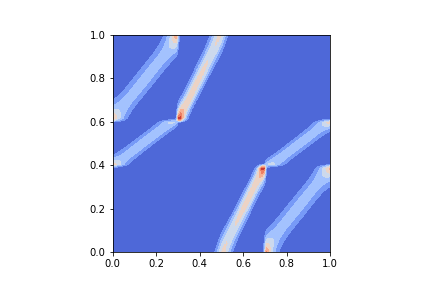}&\includegraphics[width=.35\linewidth]{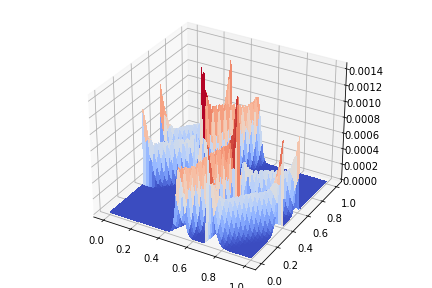} &\includegraphics[width=.35\linewidth]{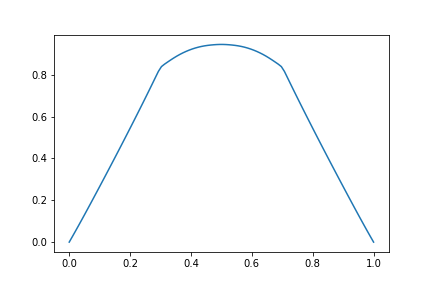}\\
\includegraphics[width=.35\linewidth]{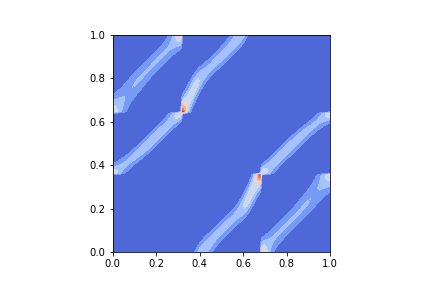}&\includegraphics[width=.35\linewidth]{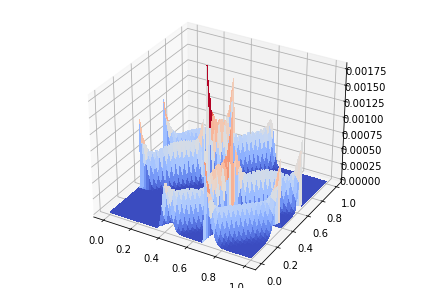} &\includegraphics[width=.35\linewidth]{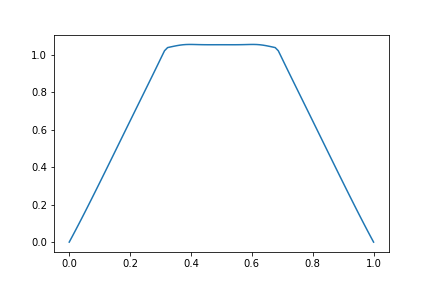}\\
\includegraphics[width=.35\linewidth]{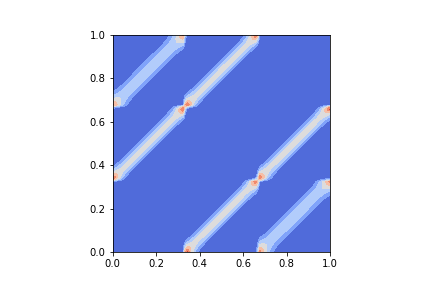}&\includegraphics[width=.35\linewidth]{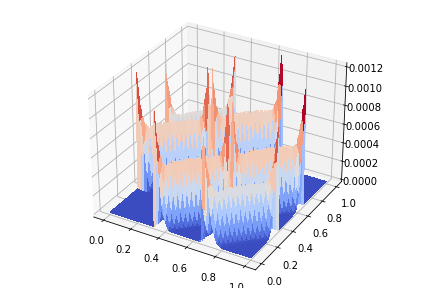} &\includegraphics[width=.35\linewidth]{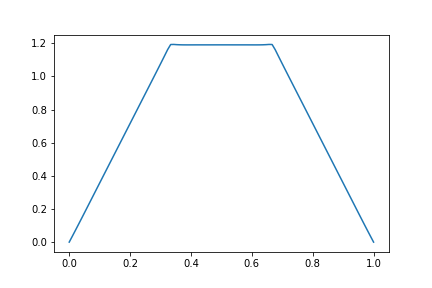}\\
(a)&(b)&(c)}
\caption{(Log cost) Column (a): support of  the coupling $\gamma^{\epsilon}_{1,2}$. Column (b): surface of the coupling $\gamma^{\epsilon}_{1,2}$. Column (c): potential $\phi(\epsilon)$. For $\epsilon=0,0.25,0.5,0.75,1$}
\label{fig:fig1}
\end{figure}

  
\begin{table}[h!]
\begin{tabular}{ |c|c|c|c|c| } 
 \hline
  & 3rd RK & 5th RK &8th RK & Sinkhorn \\ 
  \hline
relative error & $1.47\times 10^{-5}$ & $7.8\times 10^{-6}$   & $7.62\times 10^{-6}$ & $5.46\times 10^{-6}$ \\ 
\hline
iterations & 87 & 87 & 87 & 820 \\ 
 \hline
 CPU time (sec) &72.39 &158.9  & 385.1 & 102.8 \\
 \hline
\end{tabular}
\caption{Comparison between the ODE approach and Sinkhorn for the uniform density and 400 gridpoints}
\label{tab1}
\end{table}
In Figures \ref{fig:comp}-\ref{fig:fig3} we have kept the same data as before, but we have chose the negative harmonic cost, that is $w(x,y)=-|x-y|^2$. 
In particular in Figures \ref{fig1log} and \ref{fig1harm} we compare the potential obtained with Sinkhorn and the one at time $1$ of the ODE for the log cost and the negative harmonic, respectively.
We highlight that the solution at $\epsilon=0$ is $-Id$ and then the final coupling is supported, as expected, on the hyperplane $x+y+z=1.5$. 
 Looking at Figure \ref{fig:fig3}, notice that in this case the optimal transport plan is deterministic at the initial time, and also shortly after, but then it is possible to notice an interesting transition to  a spread transport plan.
\begin{figure}[h!]
     \centering
     \begin{subfigure}{0.4\textwidth}
         \centering
         \includegraphics[width=1\linewidth]{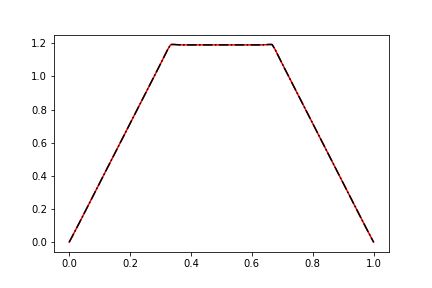}
         \caption{Log cost.}
         \label{fig1log}
     \end{subfigure}
     \begin{subfigure}{0.4\textwidth}
         \centering
         \includegraphics[width=1\linewidth]{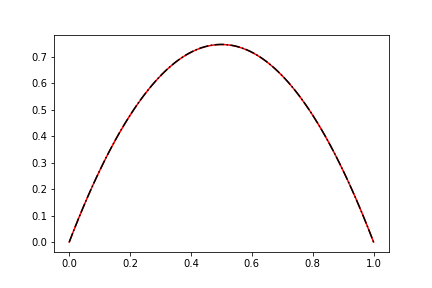}
         \caption{Negative harmonic cost.}
         \label{fig1harm}
     \end{subfigure}
     \caption{Optimal potential computed via Sinkhorn (red line). Potential computed via the ODE (black dot-dashed line).}
     \label{fig:comp}
\end{figure}


\begin{figure}[h!]
\TabThree{
\includegraphics[width=.35\linewidth]{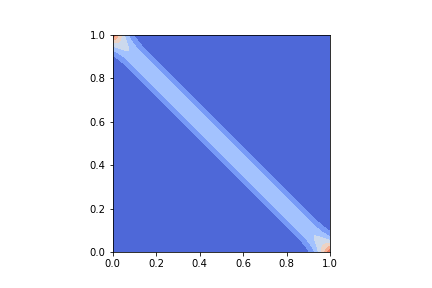}&\includegraphics[width=.35\linewidth]{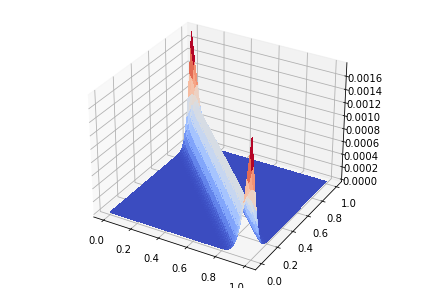} &\includegraphics[width=.35\linewidth]{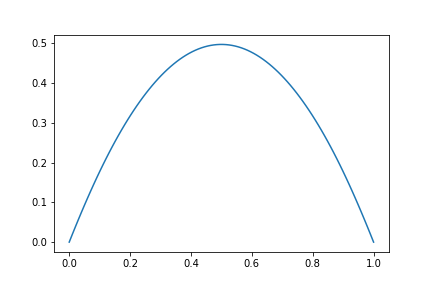}\\
\includegraphics[width=.35\linewidth]{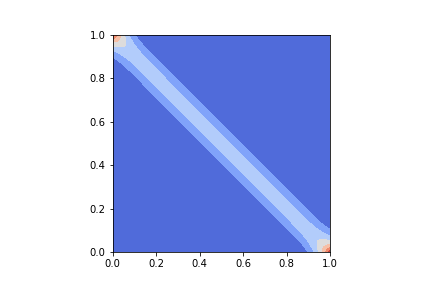}&\includegraphics[width=.35\linewidth]{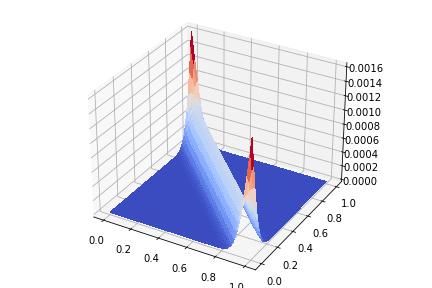} &\includegraphics[width=.35\linewidth]{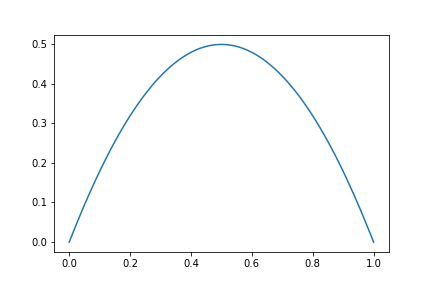}\\
\includegraphics[width=.35\linewidth]{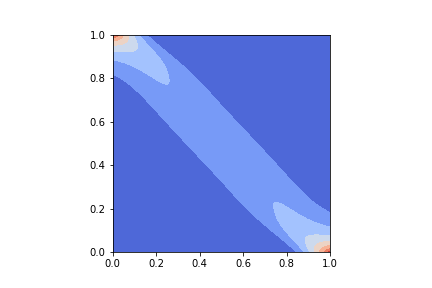}&\includegraphics[width=.35\linewidth]{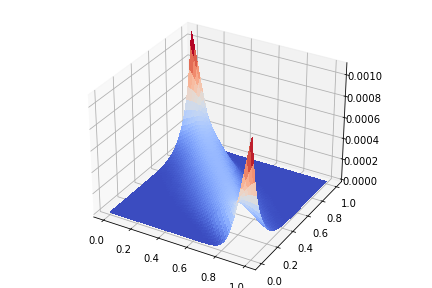} &\includegraphics[width=.35\linewidth]{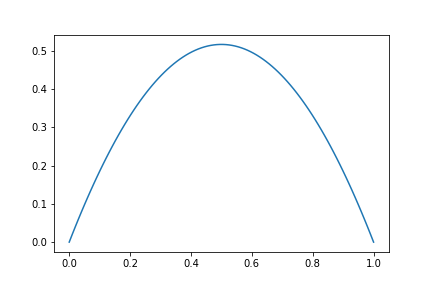}\\
\includegraphics[width=.35\linewidth]{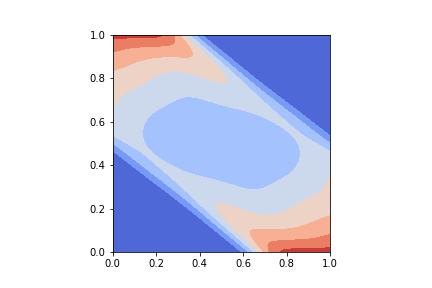}&\includegraphics[width=.35\linewidth]{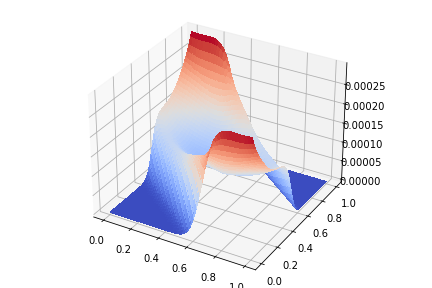} &\includegraphics[width=.35\linewidth]{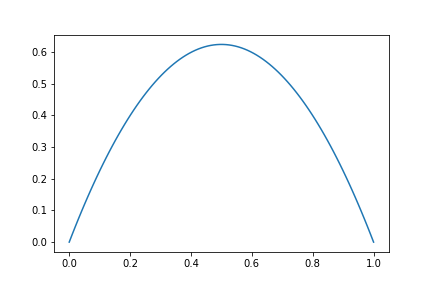}\\
\includegraphics[width=.35\linewidth]{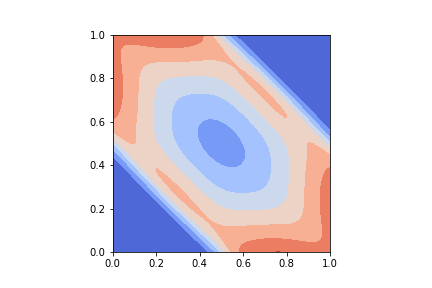}&\includegraphics[width=.35\linewidth]{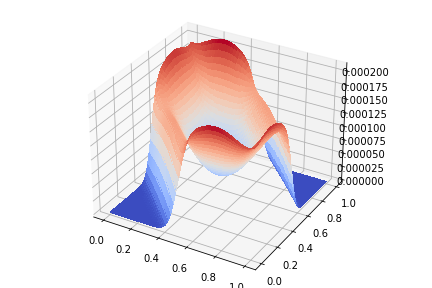} &\includegraphics[width=.35\linewidth]{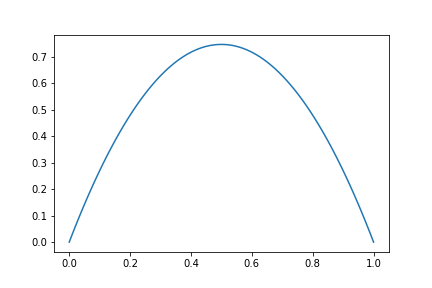}\\
(a)&(b)&(c)
}
\caption{(Negative Harmonic cost) Column (a): support of  the coupling $\gamma^{\epsilon}_{1,2}$. Column (b): surface of the coupling $\gamma^{\epsilon}_{1,2}$. Column (c): potential $\phi(\epsilon)$. For $\epsilon=0,0.25,0.5,0.75,1$}
\label{fig:fig3}
\end{figure}

\subsection{The non-symmetric ODE: Euler case}
\label{sec:euler}
In a series of papers \cite{brenier1989least,brenier1993dual,brenier1999minimal} Brenier proposed a relaxation of the incompressible Euler equation with constrained initial and final data interpreted as a geodesic on the group of measure preserving diffeomorphisms. Brenier's relaxed formulation consists in finding a probability measure over absolutely continuous paths which minimizes the average kinetic energy. In this framework the incompressibility is encoded by an additional constraint that at each time $t$, the distribution of position need be uniform.  If we consider a uniform discretization of $[0,T]$ (where $T$ is the final time) with $m$ steps in time, we recover a multi-marginal formulation of the Brenier principle with the specific cost function
\[c(x^1,\cdots,x^m)=\frac{m^2}{2T^2}\sum_{i=1}^{m-1}|x^{i+1}-x^i|^2\]
representing the discretized, in time, kinetic energy ($|\cdot|$ denotes the standard euclidean norm). The coupling $\gamma$ is the probability to find a \emph{generalized particle} on the discrete path $x^1,\cdots,x^m$. Because the fluid is incompressible, the $i$th marginal $\mu^i$ of $\gamma$ is the uniform on the $d-$dimensional cube $[0,1]^d$. For each $i\in\{1,\cdots,m\}$ the transition probability from time $1$ to time $i$ is given by the coupling 
\[\gamma_{1,i}\pi^i(x^1,x^i)=\bigg((x^1,\cdots,x^m)\mapsto(x^1,x^i)\bigg)_\#\gamma,\]
which represents the probability of finding a \emph{generalized particle} initially at $x^1$ to be at $x^i$ at time $i$. In order to impose the initial and final constraint we include by penalization, that is by adding a term to the cost function which now reads as
\[c(x^1,\cdots,x^m)=\frac{m^2}{2T^2}\sum_{i=1}^{m-1}|x^{i+1}-x^i|^2+\beta|F(x^1)-x^m|^2,\]
where $\beta>0$ is a penalization parameter in order to enforce the initial-final constraint. $F(x_1)$ represents the prescribed final position of the particle initially at position $x_1$, and the coupling $\gamma$ can be interpreted as a (generalization of) a discrete time geodesic between the identity mapping and $F$ on the space of measure preserving maps.

For an efficient implementation of Sinkhorn in this case we refer the reader to \cite{benamou2017generalized,thesislulu,benamouetalentropic}.
If we consider now the ODE setting, we have now to deal with a non symmetric case (the cost is not symmetric in the marginals anymore) and so to solve a system, still well posed, of ODEs. In particular we consider the following $c_\varepsilon$ cost
\[c_\varepsilon(x^1,\cdots,x^m)=\frac{m^2}{2T^2}|x^{2}-x^1|^2+\varepsilon\bigg(\frac{m^2}{2T^2}\sum_{i=2}^{m-1}|x^{i+1}-x^i|^2\bigg)+\beta|F(x^1)-x^m|^2.\]
For the numerical simulations we took $100$ gridpoints discretization of $[0,1]$, $m=9$ time marginals, $\beta=20$ and $\eta=0.002$. 
We solved the ODE system by using a $5th$ order  Runge-Kutta method and $h=1/100$. 
In figure \ref{fig:compPotEuler} we compare the potentials obtained via the ODE approach at $\varepsilon=1$ and Sinkhorn for two different initial-final configurations :   $F(x)=1-x$ figure \ref{figpotF1} and $F(x)=(x+1/2) \text{ mod } 1$ in figure \ref{figpotF2}. Notice in figure \ref{figpotF1} that for the particular choice of initial-final configuration the potentials $\phi_1$ and $\phi_m$ coincides as well as $\phi_i$ for $i=1,\cdots,m-1$.
In figure \ref{fig:euler} we plot the transition couplings for the initial-final configurations considered above, at $\varepsilon=1$. 
As for the symmetric case we compare the number of iteration and the CPU time of the ODE approach and Sinkhorn in table \ref{tab2}. Notice that Sinkhorn performs better than the $3rd$ order RK in terms of relative error (the reference solution for the error has been computed again via a gradient descent with backtracking). 
\begin{table}[h!]
\begin{tabular}{ |c|c|c|c|c| } 
 \hline
  & 3rd RK & 5th RK &8th RK & Sinkhorn \\ 
  \hline
relative error & $8.33\times 10^{-4}$ & $3.5\times 10^{-5}$   & $3.51\times 10^{-6}$ & $3.5\times 10^{-6}$ \\ 
\hline
iterations & 92 & 92 & 92 & 6649 \\ 
 \hline
 CPU time (sec) &321.58 &790.17  & 1225.45 & 327.82 \\
 \hline
\end{tabular}
\caption{Comparison between the ODE approach and Sinkhorn for the Euler case with $F(x)=(x+1/2)\text{ mod }1$, $100$ gridpoints and $9$ time marginals.}
\label{tab2}
\end{table}
\begin{figure}[h!]
     \centering
     \begin{subfigure}{0.5\textwidth}
         \centering
         \includegraphics[width=1.2\linewidth]{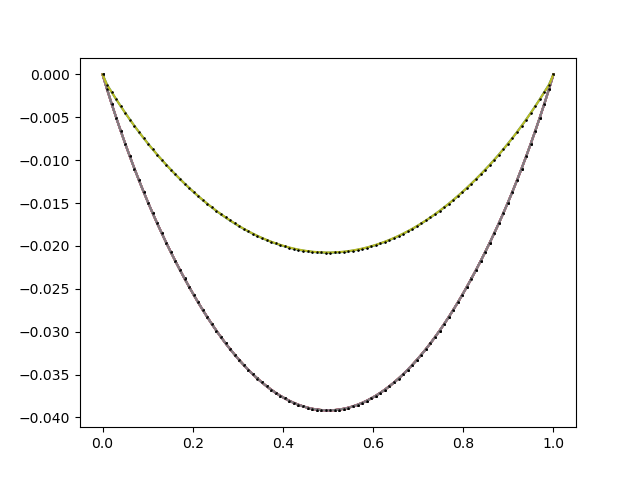}
         \caption{Comparison of potentials for $F(x)=1-x$.}
         \label{figpotF1}
     \end{subfigure}
     \begin{subfigure}{0.5\textwidth}
         \centering
         \includegraphics[width=1.2\linewidth]{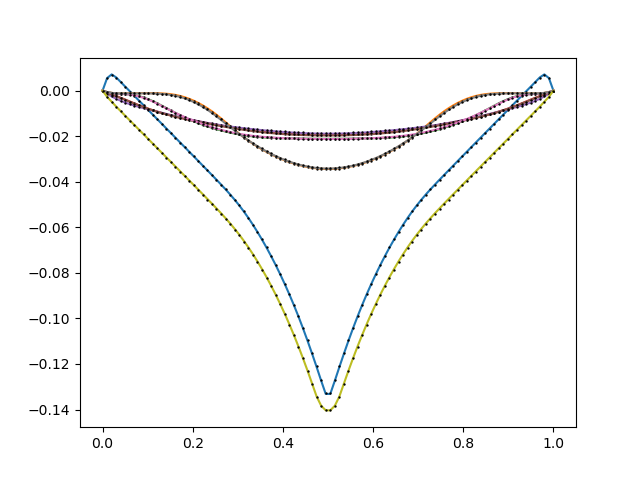}
         \caption{Comparison of potentials for $F(x)=x+1/2\text{ mod } 1$.}
         \label{figpotF2}
     \end{subfigure}
     \caption{Optimal potentials computed via Sinkhorn (black dots). Potential computed via the ODE (colored solid lines).}
     \label{fig:compPotEuler}
\end{figure}

\begin{figure}[h!]
\centering
\TabThree{
$t=0$&\includegraphics[width=.3\linewidth]{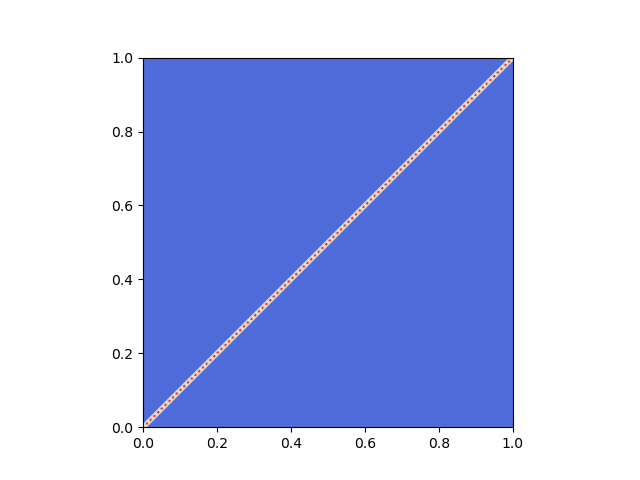}&\includegraphics[width=.3\linewidth]{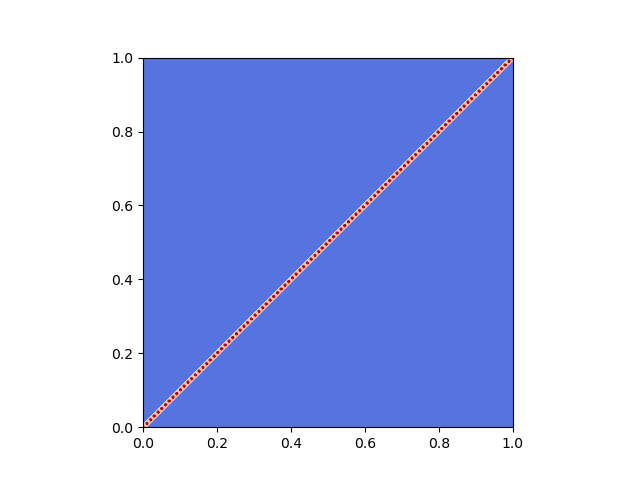}  \\

$t=1/8$&\includegraphics[width=.3\linewidth]{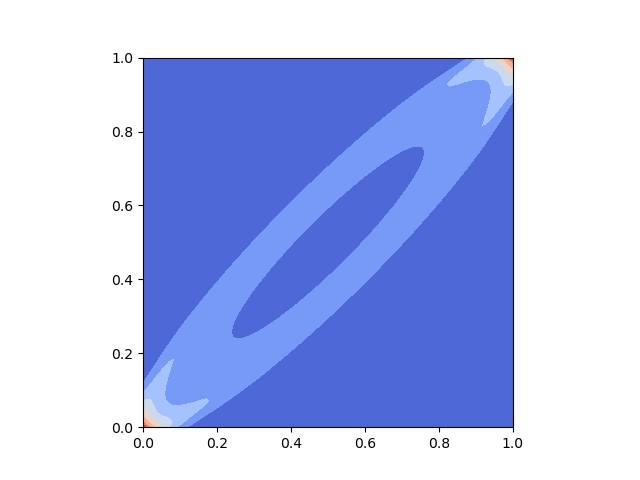}&\includegraphics[width=.3\linewidth]{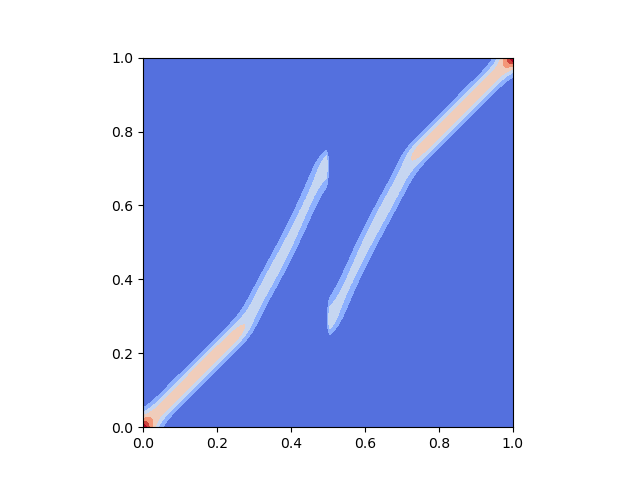}  \\
$t=1/4$&\includegraphics[width=.3\linewidth]{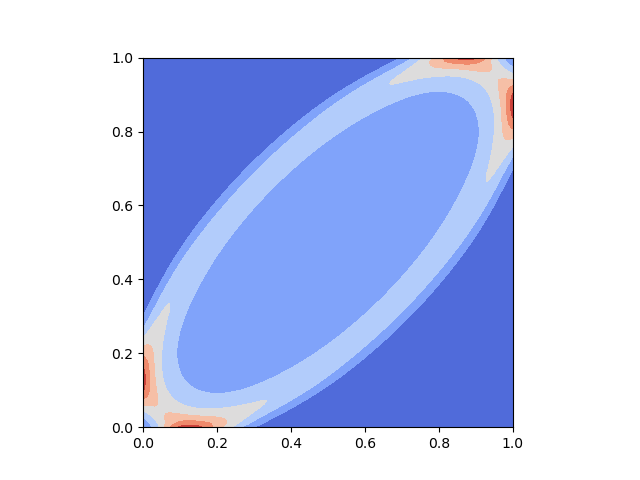}&\includegraphics[width=.3\linewidth]{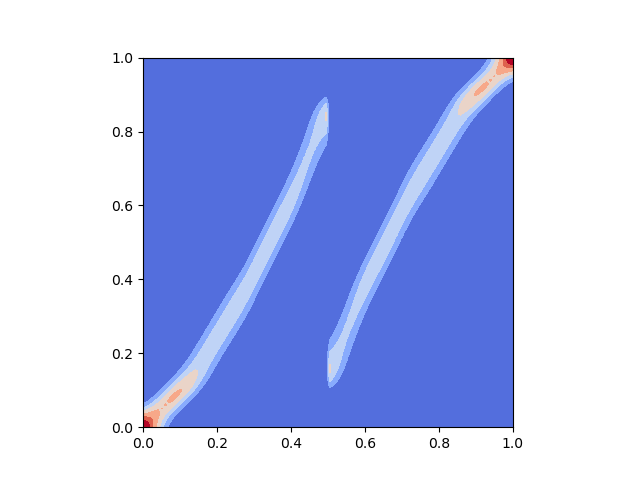}  \\
$t=1/2$&\includegraphics[width=.3\linewidth]{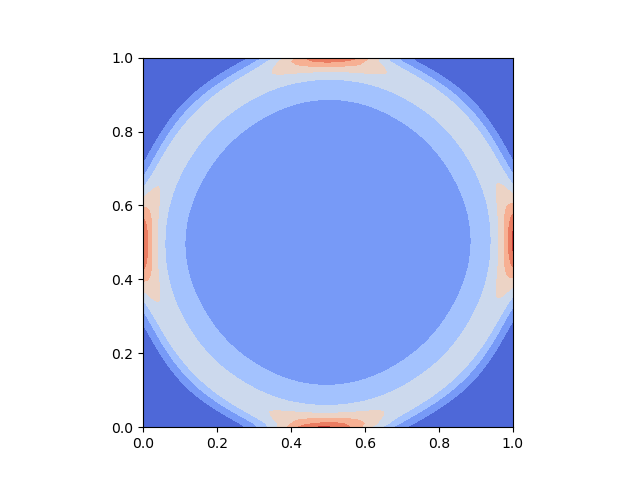}&\includegraphics[width=.3\linewidth]{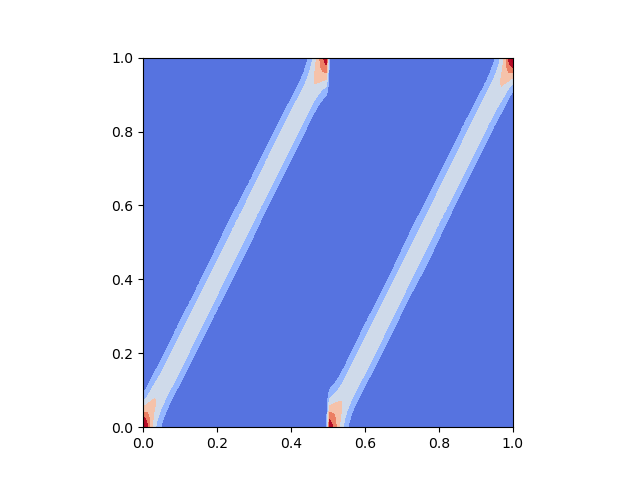}  \\
$t=3/4$&\includegraphics[width=.3\linewidth]{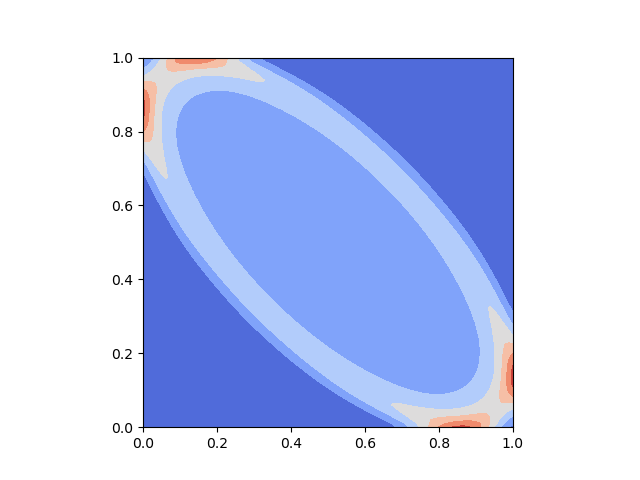}&\includegraphics[width=.3\linewidth]{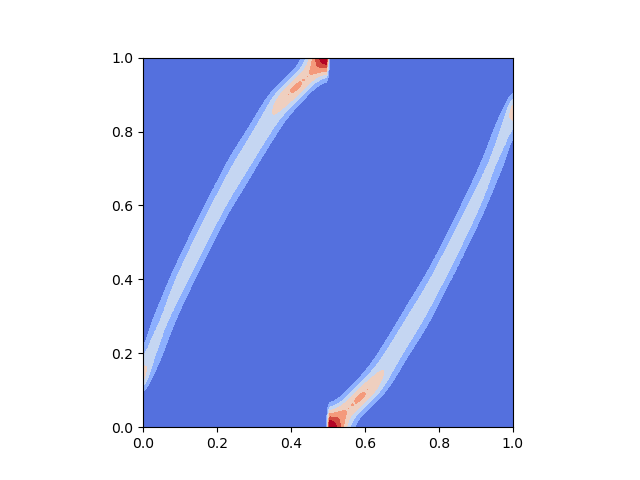}  \\

$t=7/8$&\includegraphics[width=.3\linewidth]{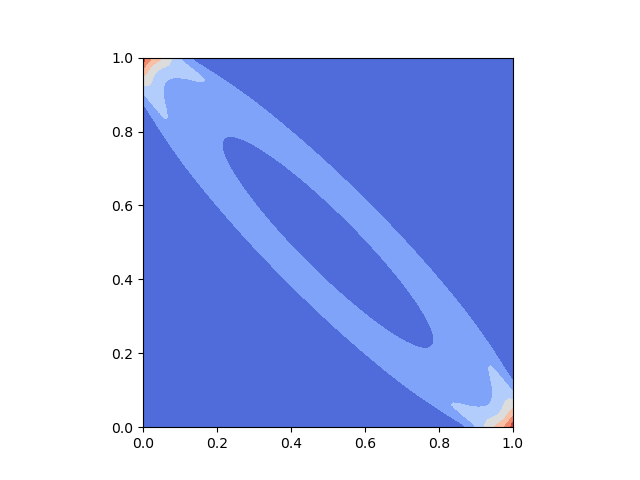}&\includegraphics[width=.3\linewidth]{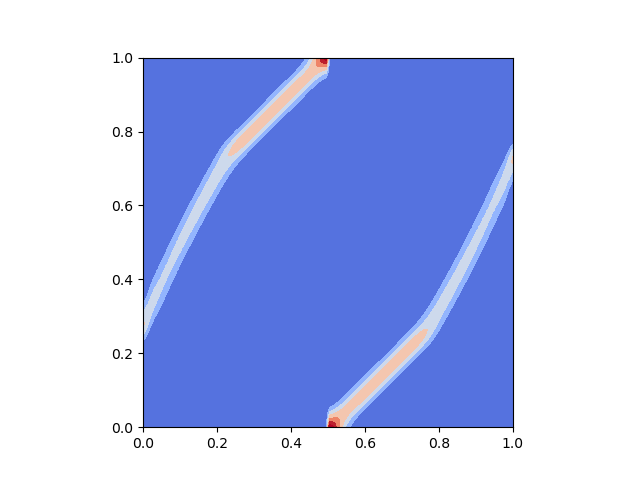}  \\

$t=1$& \includegraphics[width=.3\linewidth]{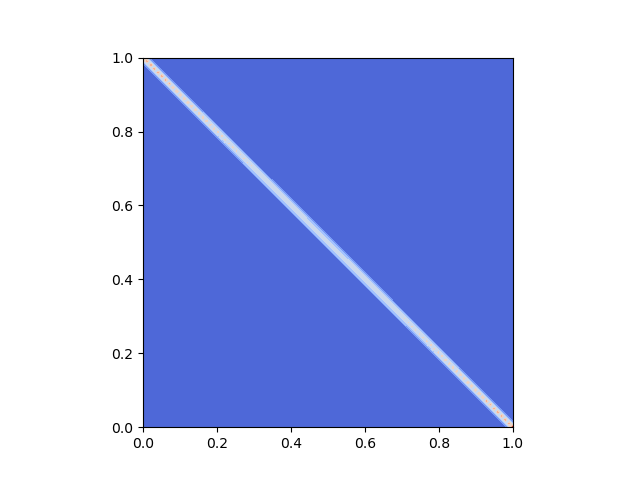}&\includegraphics[width=.3\linewidth]{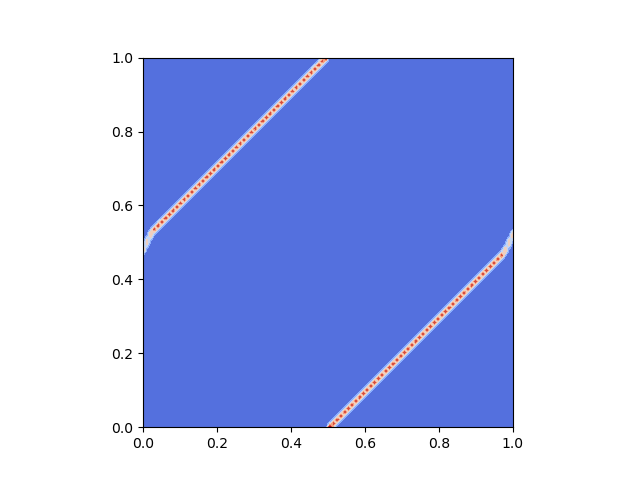}  \\

&$F(x)=1-x$ & $F(x)=x+1/2\text{ mod }1$ 
}
\caption{Transition plan $\gamma_{1,i}$ for different final configuration of Brenier relaxed formulation of incompressible Euler equations.}
\label{fig:euler}
\end{figure}

\bigskip

\noindent{\textbf{Acknowledgement.}} B.P. is pleased to acknowledge the support of National Sciences and Engineering Research Council of Canada Discovery Grant number  04658-2018.  He is also grateful for the kind hospitality at the Institut de Mathématiques d'Orsay, Université Paris-Saclay during his stay in November of 2019 as a missionaire scientifique invité, when this work was partially completed. L.N. was supported by a public grant as part of the Investissement d'avenir project, reference ANR-11-LABX-0056-LMH, LabEx LMH and  from the CNRS PEPS JCJC (2022). The authors would like to thank Guillaume Carlier for all the fruitful discussions which lead to Lemma \ref{lem:strong_convexity}, as well as two anonymous referees for their insightful comments. A preliminary version of the results in this paper appeared in the arxiv preprint version of our paper \cite{NennaPass22}, posted on January 3, 2022.  Prior to that paper being accepted for publication, these results were removed and they have since been refined and expanded to form the present manuscript. The authors would like to thank the anonymous referees for their comments and suggestions which helped to improve the paper.

%

\appendix

  \section{Second derivatives of $\tilde\Phi$}
 \label{app:seconderv}
 In this appendix we detail the second derivatives of $\tilde\Phi$ with respect to $\phi$ and $\epsilon$.
 Let us consider firstly the term \eqref{eq:ODE}
 \[ \dfrac{\partial}{\partial \epsilon}\nabla_\varphi\Phi(\varphi,\epsilon),\]
notice that  $\nabla_\varphi\Phi(\varphi,\epsilon)$ it is a composition of an exponential with a linear function in $\epsilon$, meaning that it is differentiable with respect to $\epsilon$.
We obtain then the following
 \[ \frac{\partial}{\partial \epsilon}(\nabla_\phi\Phi(\phi,\epsilon))_{\phi_z}=-\dfrac{\exp(\phi_z/\eta)}{\eta}\sum_{\bar x\in X^{m-1}}\partial_\epsilon(c_\epsilon(z,\bar x))\exp\Bigg(\frac{\sum_{i=1}^{m-1}\phi_{x^i}-c_\epsilon(z,\bar x)}{\eta}\Bigg),\]
\[\begin{split}
  \frac{\partial}{\partial \epsilon} &\nabla_{\phi}\Phi(\phi(\epsilon),\epsilon):=-\frac{1}{\eta}\exp(\phi/\eta)\rho\Bigg(\sum_{(x,\bar y)\in X^{m-1}}\partial_\epsilon c_\epsilon(x,z,\bar y)\exp\Big(\frac{\sum_{i=3}^{m}\phi_{y^i}- c_\epsilon(x,z,\bar y)}{\eta}\Big)\tilde \rho_{\bar y}\bar\rho_x\\
  &+\sum_{(x,\bar y)\in X^{m-1}}\exp\Big(\frac{\sum_{i=3}^{m}\phi_{y^i}- c_\epsilon(x,z,\bar y)}{\eta}\Big)\tilde \rho_{\bar y}\frac{\sum_{\bar w\in X^{m-1}}\partial_\epsilon c_\epsilon(x,\bar w)\exp\Big(\frac{\sum_{i=2}^{m}\phi_{w^i}- c_\epsilon(x,\bar w)}{\eta}\Big)\tilde \rho_{\bar w}}{\sum_{\bar w\in X^{m-1}}\exp\Big(\frac{\sum_{i=2}^{m}\phi_{w^i}- c_\epsilon(x,\bar w)}{\eta}\Big)\tilde \rho_{\bar w}}\bar\rho_x \Bigg),
\end{split} \]
 where
 \[\partial_\epsilon c_\epsilon(z,\bar x)=\sum_{i,j=2,i\neq j}^{m-1}w(x^i,x^j). \]
 Concerning the second derivative with respect to $\phi$. it is again quite easy to see that $\Phi$ is twice differentiable, then we have 
\[ D^2_{\phi,\phi}\tilde\Phi=\frac{1}{\eta}\diag(I_1)+\frac{m-2}{\eta}(e^{\phi/\eta}\rho)\otimes (e^{\phi/\eta}\rho)I_2-\frac{m-1}{\eta}\sum_{y\in X}I_3^y\otimes I^y_3\rho_y, \]
where
\[ 
\begin{split}
&(I_1)_z=e^{\phi_z/\eta}\rho_z\sum_y\sum_{\bar x\in X^{m-2}}\exp{\Bigg(\frac{\sum_{i=3}^{m}\phi_{x^i}-c_\epsilon(y,z,\bar x)}{\eta}\Bigg)}(\otimes^{m-2}\rho)_{\bar x}\bar \rho_y,\\
&(I_2)_{z,w}= \sum_y\sum_{\bar x\in X^{m-3}}\exp{\Bigg(\frac{\sum_{i=4}^{m}\phi_{x^i}-c_\epsilon(y,z,w,\bar x)}{\eta}\Bigg)}(\otimes^{m-3}\rho)_{\bar x}\bar \rho_y,\\
&(I^y_3)_z=\dfrac{e^{\phi_z/\eta}\rho_z\sum_{\bar x\in X^{m-2}}\exp{\Bigg(\frac{\sum_{i=3}^{m}\phi_{x^i}-c_\epsilon(y,z,\bar x)}{\eta}\Bigg)}(\otimes^{m-2}\rho)_{\bar x} }{\sum_{\bar x\in X^{m-1}}\exp\Bigg(\frac{\sum_{i=2}^{m}\phi_{x^i}-c_\epsilon(y,\bar x)}{\eta}\Bigg)\tilde\rho_{\bar x}}\
\end{split}
\]

\bibliographystyle{plain}
\bibliography{bibli}

\end{document}